\newtheorem{lemma}{Lemma}[section]
\newtheorem{prop}[lemma]{Proposition}
\newtheorem{cor}[lemma]{Corollary}
\newtheorem{thm}[lemma]{Theorem}
\newtheorem{example}[lemma]{Example}
\newtheorem{thm?}[lemma]{Theorem?}
\newtheorem{ques}[lemma]{Question}
\newtheorem{remark}[lemma]{Remark}
\begin{document}
\title{Restricted Variable Chevalley-Warning Theorems}

\author{Anurag Bishnoi}
\author{Pete L. Clark}

\newcommand{\etalchar}[1]{$^{#1}$}
\newcommand{\F}{\mathbb{F}}
\newcommand{\et}{\textrm{\'et}}
\newcommand{\ra}{\ensuremath{\rightarrow}}
\newcommand{\lra}{\ensuremath{\longrightarrow}}
\newcommand{\FF}{\F}
\newcommand{\ff}{\mathfrak{f}}
\newcommand{\Z}{\mathbb{Z}}
\newcommand{\N}{\mathbb{N}}
\newcommand{\ch}{}
\newcommand{\R}{\mathbb{R}}
\renewcommand{\P}{\mathbb{P}}
\newcommand{\PP}{\mathbb{P}}
\newcommand{\pp}{\mathfrak{p}}
\newcommand{\C}{\mathbb{C}}
\newcommand{\Q}{\mathbb{Q}}
\newcommand{\ab}{\operatorname{ab}}
\newcommand{\Aut}{\operatorname{Aut}}
\newcommand{\gk}{\mathfrak{g}_K}
\newcommand{\gq}{\mathfrak{g}_{\Q}}
\newcommand{\OQ}{\overline{\Q}}
\newcommand{\Out}{\operatorname{Out}}
\newcommand{\End}{\operatorname{End}}
\newcommand{\Gal}{\operatorname{Gal}}
\newcommand{\CT}{(\mathcal{C},\mathcal{T})}
\newcommand{\lcm}{\operatorname{lcm}}
\newcommand{\Div}{\operatorname{Div}}
\newcommand{\OO}{\mathcal{O}}
\newcommand{\rank}{\operatorname{rank}}
\newcommand{\tors}{\operatorname{tors}}
\newcommand{\IM}{\operatorname{IM}}
\newcommand{\CM}{\mathbf{CM}}
\newcommand{\HS}{\mathbf{HS}}
\newcommand{\Frac}{\operatorname{Frac}}
\newcommand{\Pic}{\operatorname{Pic}}
\newcommand{\coker}{\operatorname{coker}}
\newcommand{\Cl}{\operatorname{Cl}}
\newcommand{\loc}{\operatorname{loc}}
\newcommand{\GL}{\operatorname{GL}}
\newcommand{\PGL}{\operatorname{PGL}}
\newcommand{\PSL}{\operatorname{PSL}}
\newcommand{\Frob}{\operatorname{Frob}}
\newcommand{\Hom}{\operatorname{Hom}}
\newcommand{\Coker}{\operatorname{\coker}}
\newcommand{\Ker}{\ker}
\newcommand{\g}{\mathfrak{g}}
\newcommand{\sep}{\operatorname{sep}}
\newcommand{\new}{\operatorname{new}}
\newcommand{\Ok}{\mathcal{O}_K}
\newcommand{\ord}{\operatorname{ord}}
\newcommand{\mm}{\mathfrak{m}}
\newcommand{\Ohell}{\OO_{\ell^{\infty}}}
\newcommand{\cc}{\mathfrak{c}}
\newcommand{\ann}{\operatorname{ann}}
\renewcommand{\tt}{\mathfrak{t}}
\renewcommand{\cc}{\mathfrak{a}}
\renewcommand{\aa}{\mathfrak{a}}
\newcommand\leg{\genfrac(){.4pt}{}}
\renewcommand{\gg}{\mathfrak{g}}
\renewcommand{\O}{\mathcal{O}}
\newcommand{\Spec}{\operatorname{Spec}}
\newcommand{\rr}{\mathfrak{r}}
\newcommand{\rad}{\operatorname{rad}}
\newcommand{\SL}{\operatorname{SL}}
\def\hh{\mathfrak{h}}
\newcommand{\zz}{\mathbf{z}}
\newcommand{\kk}{\underline{k}}
\newcommand{\uomega}{\underline{\omega}}
\newcommand{\AGL}{\operatorname{AGL}}

\begin{abstract}
We pursue various restricted variable generalizations of the Chevalley-Warning theorem for low degree polynomial systems over a finite field.  Our first such result involves variables restricted to Cartesian products of the Vandermonde subsets of $\F_q$ defined by G\'acs-Weiner and Sziklai-Tak\'ats.  We then define an invariant 
$\uomega(X)$ of a nonempty subset of $\F_q^n$.  Our second result involves $X$-restricted variables when the degrees of 
the polynomials are small compared to $\uomega(X)$.  We end by exploring various classes of subsets for which $\uomega(X)$ can be 
bounded from below.
\end{abstract}
\maketitle


\section{Introduction}
\noindent
We denote the set of  non-negative integers by $\N$ and the set of positive integers by $\Z^+$.  Throughout, $\F_q$ denotes a finite field of order $q$ and characteristic $p$.  For a finite, nonempty subset $X$ of a field $F$, we put 
\[ \varphi_X(t) \coloneqq \prod_{x \in X} (t-x), \]
so $\varphi_X$ generates the ideal of all polynomials $f \in F[t]$ that vanish identically on $X$.
\\ \\
Our point of departure is the following results of Chevalley and Warning, published in consecutive articles in the same journal in 1935 \cite{Chevalley35}, \cite{Warning35}.\footnote{Some further information about the history of these results can be found in \cite[\S 1]{CGS21}.}

\begin{thm}
\label{CW}
 Let $f_1,\ldots,f_r \in \F_q[t_1,\ldots,t_n]$ be polynomials of degrees $d_1,\ldots,d_r \in \Z^+$, and 
suppose that $d \coloneqq \sum_{j=1}^r d_j < n$.  Put 
\[ Z = Z(f_1,\ldots,f_r) \coloneqq \{ x = (x_1,\ldots,x_n) \in \F_q^n \mid f_1(x) = \ldots = f_r(x) = 0\} \]
be the solution set of the polynomial system.  
\begin{itemize}
\item[a)] (Chevalley) We \emph{cannot} have $\# Z = 1$.
\item[b)] (Chevalley-Warning) We have $p \mid \# Z$.
\item[c)] (Warning) If $Z$ is nonempty, then $\# Z \geq q^{n-d}$.
\end{itemize}
\end{thm}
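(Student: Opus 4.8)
The plan is to prove part (b) first, deduce (a) from it in one line, and then establish (c) by refining the same construction; parts (a) and (b) are the classical Chevalley--Warning argument and the real work is in (c).

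For (b) I would introduce the indicator polynomial
\[ P \coloneqq \prod_{j=1}^{r} \bigl( 1 - f_j^{\,q-1} \bigr) \in \F_q[t_1,\ldots,t_n], \]
which has total degree at most $(q-1)d < (q-1)n$. Using $x^{q-1} = 1$ for $x \in \F_q^{\times}$ and $0^{q-1} = 0$, one checks $P(x) = 1$ for $x \in Z$ and $P(x) = 0$ otherwise, so $\sum_{x \in \F_q^{n}} P(x)$ is the image of the integer $\#Z$ in $\F_q$; it thus suffices to show this sum vanishes. By linearity this reduces to the elementary lemma that $\sum_{x \in \F_q^{n}} x^{\alpha} = 0$ in $\F_q$ for every monomial $t^{\alpha}$ with $|\alpha| < (q-1)n$: the sum factors as $\prod_{i=1}^{n}\bigl(\sum_{c \in \F_q} c^{\alpha_i}\bigr)$, the degree bound forces some $\alpha_i \le q-2$, and that factor is $0$ (equal to $q\cdot 1 = 0$ if $\alpha_i = 0$, and a geometric sum with ratio $\neq 1$ over a generator of $\F_q^{\times}$ if $0 < \alpha_i < q-1$). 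Hence $p \mid \#Z$, and (a) follows since $\#Z = 1$ would give $p \mid 1$.

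For (c), assume $Z \neq \emptyset$ and reduce $P$ modulo $(t_1^q - t_1, \ldots, t_n^q - t_n)$ --- that is, repeatedly replace $t_i^q$ by $t_i$ --- to get $\widetilde P$, which defines the same function $\F_q^n \to \F_q$ as $P$, has $\deg_{t_i} \widetilde P \le q-1$ for all $i$, has total degree at most $\deg P \le (q-1)d$, and is nonzero (a polynomial with all partial degrees $< q$ is determined by the function it defines, and that function is not identically $0$ since $Z \neq \emptyset$). So $\#Z$ equals the number of nonzeros of $\widetilde P$, and the theorem reduces to a counting lemma: a nonzero $g \in \F_q[t_1,\ldots,t_n]$ with all partial degrees $\le q-1$ and $\deg g \le (q-1)d$ has at least $q^{\,n-d}$ nonzeros. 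I would prove this by induction on $n$, the case $n=1$ being the bound that a nonzero univariate polynomial of degree $e$ has at least $q - e$ nonzeros. For the step, write $g = \sum_{i=0}^{q-1} t_n^{\,i}\, Q_i(t_1,\ldots,t_{n-1})$, let $i_0$ be the largest index with $Q_{i_0} \neq 0$, and observe that above each of the (by induction, many) points where $Q_{i_0}\neq 0$ the one-variable polynomial obtained by fixing $t_1,\ldots,t_{n-1}$ has degree exactly $i_0$ and hence at least $q - i_0$ nonzeros; combining $\#\{g \neq 0\} \ge (q-i_0)\,\#\{Q_{i_0} \neq 0\}$ with $\deg g \ge i_0 + \deg Q_{i_0}$ then closes the induction.

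The routine ingredients are the two one-line facts above (the vanishing character sum, and the univariate root bound). The step that needs care is the arithmetic in the induction for the counting lemma: the crude bound $q^{\,n - \lceil \deg g/(q-1)\rceil}$ does not propagate, so one must strengthen the inductive claim to the sharp estimate $(q-s)\,q^{\,n-1-a}$ with $\deg g = a(q-1)+s$, $0 \le s < q-1$ (which specializes to $q^{\,n-d}$ at $\deg g = (q-1)d$); with that normalization the inequality needed in the step collapses, after an elementary substitution, to $xy \ge 0$. I expect this bookkeeping to be the only real obstacle; everything else is standard.
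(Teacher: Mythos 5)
Your proposal is correct and complete. It does, however, follow a different route from the paper, which treats Theorem \ref{CW} as a classical citation: the paper deduces part a) \emph{directly} from the Combinatorial Nullstellensatz (Theorem \ref{CN}), by observing that if $Z=\{0\}$ then $\chi - \prod_i(1-t_i^{q-1})$ vanishes identically on $\F_q^n$ yet has nonzero coefficient on $t_1^{q-1}\cdots t_n^{q-1}$, and it obtains part b) as the specialization $X_1=\cdots=X_n=\F_q$ of the Coefficient-Formula-based Theorem \ref{RESCW0}; part c) is not reproved at all. Your proof of b) is exactly Ax's ``quick proof'' (alluded to in the paper's remark that $\F_q^n$ is optimal), and deducing a) from b) is legitimate but loses the feature the paper cares about, namely that the Nullstellensatz argument for a) survives in restricted-variable settings where the mod-$p$ conclusion fails. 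The real content you add is c): the reduction of the Chevalley polynomial modulo $(t_1^q-t_1,\ldots,t_n^q-t_n)$ and the counting lemma for reduced polynomials is the standard proof of Warning's second theorem, and your sketch is sound. I checked the one step you flag as delicate: with the strengthened inductive bound $(q-s)q^{n-1-a}$ for $\deg g = a(q-1)+s$, $0\le s<q-1$ (which is monotone non-increasing in $\deg g$, so an upper bound on the degree suffices), the step $N(g)\ge (q-i_0)N(Q_{i_0})$ closes in both cases: for $i_0\le s$ the required inequality is $(q-i_0)(q-s+i_0)\ge q(q-s)$, i.e.\ $i_0(s-i_0)\ge 0$, and for $i_0>s$ (which forces $a\ge 1$) it is $(q-i_0)(i_0-s+1)\ge q-s$, i.e.\ $(i_0-s)(q-1-i_0)\ge 0$; both hold since $i_0\le q-1$. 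The degenerate cases ($a\ge n$, where the claimed bound is less than $1$ and a nonzero reduced polynomial trivially has a nonzero value) also cause no trouble, so your argument is a valid self-contained proof of all three parts.
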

\noindent
The second author has taken the perspective (e.g. in \cite{Clark14}) that Theorem \ref{CW}a) is a precursor of the following celebrated result.

\begin{thm}[Combinatorial Nullstellensatz II \cite{Alon99}]
\label{CN}
Let $F$ be a field, let $n \in \Z^+$, let $d_1,\ldots,d_n \in \N$ and let $f \in F[t_1,\ldots,t_n]$ be a polynomial.  We suppose:
\begin{itemize}
\item[(i)] We have $\deg(f) = d_1 + \ldots + d_n$, and 
\item[(ii)] The coefficient of $t_1^{d_1} \cdots t_n^{d_n}$ in $f$ is nonzero.
\end{itemize}
Then, for any subsets $X_1,\ldots,X_n$ of $F$ with $\# X_i = d_i + 1$ for $1 \leq i \leq n$, there exists an $x = (x_1,\ldots,x_n) \in 
X \coloneqq \prod_{i=1}^n X_i$ such that $f(x) \neq 0$.  
\end{thm}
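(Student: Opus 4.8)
The plan is to deduce Theorem~\ref{CN} from the basic fact---call it the \emph{vanishing lemma}---that a nonzero polynomial $g \in F[t_1,\ldots,t_n]$ with $\deg_{t_i}(g) < \# X_i$ for each $i$ cannot vanish identically on $X = \prod_{i=1}^n X_i$. I would prove the vanishing lemma by induction on $n$: the case $n = 1$ is the statement that a nonzero one-variable polynomial of degree $< \# X_1$ has fewer than $\# X_1$ roots; for the inductive step, write $g = \sum_{j} g_j(t_1,\ldots,t_{n-1})\, t_n^j$, specialize $(t_1,\ldots,t_{n-1})$ to an arbitrary point of $\prod_{i < n} X_i$, apply the one-variable case in $t_n$ to see each $g_j$ vanishes on $\prod_{i<n} X_i$, and invoke the inductive hypothesis.

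Next, given $f$ as in the statement, I would \emph{reduce} $f$ modulo the polynomials $\varphi_{X_i}(t_i)$. Note $\varphi_{X_i}$ is monic of degree $d_i + 1 = \# X_i$, so $t_i^{d_i+1} - \varphi_{X_i}(t_i)$ has $t_i$-degree $\le d_i$. As long as $f$ has a monomial $t_i^{e} M$ with $e \ge d_i + 1$ for some $i$, replace $t_i^{d_i+1}$ inside it by $t_i^{d_i+1} - \varphi_{X_i}(t_i)$ (equivalently, subtract a multiple of $\varphi_{X_i}$); this strictly lowers the $t_i$-degree of that monomial and strictly lowers its total degree. The process terminates at a polynomial $\tilde f$ with $\deg_{t_i}(\tilde f) \le d_i$ for all $i$, and since every step subtracted a polynomial vanishing on $X$, we have $\tilde f \equiv f$ on $X$.

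The crux is to control the coefficient of the distinguished monomial $t_1^{d_1}\cdots t_n^{d_n}$, whose total degree is exactly $d_1 + \cdots + d_n = \deg f$ by hypothesis~(i). Every monomial of $f$ has total degree $\le \deg f$, and each reduction step only decreases total degrees, so no intermediate polynomial ever has a monomial of degree $> \deg f$; moreover each step strictly decreases the degree of the monomial it acts on, hence can neither create nor destroy a monomial of degree exactly $\deg f$. Therefore the coefficient of $t_1^{d_1}\cdots t_n^{d_n}$ in $\tilde f$ equals its coefficient in $f$, which is nonzero by hypothesis~(ii); in particular $\tilde f \neq 0$. Since $\deg_{t_i}(\tilde f) \le d_i < \# X_i$, the vanishing lemma gives a point $x \in X$ with $\tilde f(x) \neq 0$, and then $f(x) = \tilde f(x) \neq 0$, as desired.

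I expect the bookkeeping in the last paragraph to be the main obstacle: one must verify carefully that the rewriting terminates, never raises total degree, and leaves the monomial $t_1^{d_1}\cdots t_n^{d_n}$ untouched---all of which hinge on the equality $\deg f = d_1 + \cdots + d_n$ rather than a mere inequality. An equivalent route is to first establish the representation $f = \sum_{i=1}^n g_i\, \varphi_{X_i}$ with $\deg g_i \le \deg f - \# X_i$ (``Combinatorial Nullstellensatz~I'') and then read off the coefficient of $t_1^{d_1}\cdots t_n^{d_n}$: the degree bound forces every degree-$(\deg f)$ monomial of $g_i \varphi_{X_i}$ to be divisible by $t_i^{d_i+1}$, so none of them is $t_1^{d_1}\cdots t_n^{d_n}$, and the same contradiction results.
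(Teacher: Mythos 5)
Your argument is correct, and it is essentially the standard proof of the Combinatorial Nullstellensatz. Note that the paper itself offers no proof of Theorem~\ref{CN}: it is quoted from Alon's article \cite{Alon99}, so there is nothing internal to compare against. Your two routes are in fact the two classical ones: the direct reduction of $f$ modulo the $\varphi_{X_i}$ (with the observation that, thanks to $\deg f = d_1+\cdots+d_n$, the rewriting can neither create nor destroy the monomial $t_1^{d_1}\cdots t_n^{d_n}$) is the streamlined textbook argument, while the route through the representation $f=\sum_i g_i\varphi_{X_i}$ with $\deg g_i\le \deg f-\#X_i$ is Alon's original deduction of Theorem~II from Theorem~I. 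The only point you flag as delicate, termination of the rewriting, is handled most cleanly by performing ordinary polynomial division by $\varphi_{X_1},\ldots,\varphi_{X_n}$ one variable at a time: since $\varphi_{X_j}$ involves only $t_j$, dividing by it does not increase any other partial degree, so a single pass suffices; the degree bookkeeping you describe then goes through verbatim.
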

\noindent
To see the connection, we introduce the \textbf{Chevalley polynomial} 
\[ \chi = \chi(f_1,\ldots,f_r) \coloneqq \prod_{j=1}^r (1-f_j^{q-1}). \]
For all $x \in \F_q^n$, we have 
\[\chi(x) = \begin{cases} 1 & \text{if } x \in Z(f_1,\ldots,f_r) \\ 0 & \text{otherwise} \end{cases}. \]
Because the hypotheses and the conclusion 
of Theorem \ref{CW}a) are stable under translation of variables, to prove Theorem \ref{CW}a), seeking a contradiction we may assume 
that $Z = \{0\}$, and then the polynomial
\[ P \coloneqq \chi(t_1,\ldots,t_n) - \prod_{i=1}^n (1-t_i^{q-1}) \]
evaluates to $0$ for all $x \in \F_q^n$.  But our hypothesis $\sum_{j=1}^r d_j < n$ implies that $\deg(\chi) < (q-1)n$, 
so the coefficient of $t_1^{q_1} \cdots t_n^{q-1}$ in $P$ is  $(-1)^{n+1} \neq 0$.  This contradicts Theorem \ref{CN} with $X_1 = \ldots = X_n = \F_q$.  
\\ \\
In turn Theorem \ref{CN} motivates us to consider \textbf{restricted variable generalizations} of Theorem \ref{CW}: instead of considering 
solutions to our polynomial system $f_1 = \ldots = f_r = 0$ on all of $\F_q^n$, we choose a subset $X \subset \F_q^n$ and look only 
at 
\[ Z_X \coloneqq \{ x = (x_1,\ldots,x_n) \in X \mid f_1(x) = \ldots = f_r(x) = 0\}. \]
A similar argument gives the following result of Schauz \cite{Schauz08} and Brink \cite{Brink11}.  

\begin{thm}
\label{RESCHEV}
Let $F$ be a field $\chi \in F[t_1,\ldots,t_n]$, let $X_1,\ldots,X_n$ be nonempty finite subsets of $F$, and put $X \coloneqq \prod_{i=1}^n X_i$.  
\begin{itemize}
\item[a)] If $\deg(\chi) < \sum_{i=1}^n 
(\# X_i-1)$, then $\# \{x \in \prod_{i=1}^n X_i \mid \chi(x) \neq 0\} \neq 1$.  
\item[b)] (Restricted Variable Chevalley Theorem) Let $f_1,\ldots,f_r \in \F_q[t_1,\ldots,t_n]$ have positive degrees.  If  
\begin{equation}
\label{RESCHEVEQ}
(q-1) \sum_{j=1}^r \deg(f_j) < \sum_{i=1}^n (\# X_i-1), 
\end{equation}
then we have $\# Z_X \neq 1$.
\end{itemize}
\end{thm}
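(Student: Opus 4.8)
The plan is to deduce both parts from the Combinatorial Nullstellensatz (Theorem \ref{CN}), with part (b) following immediately from part (a) applied to the Chevalley polynomial, exactly in the spirit of the derivation of Theorem \ref{CW}a) sketched above. So the real content is part (a), and for that I would argue by contradiction using a Lagrange-interpolation correction term.

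For part (a): suppose, seeking a contradiction, that there is exactly one point $a = (a_1,\ldots,a_n) \in X$ with $\chi(a) \neq 0$, and set $v \coloneqq \chi(a)$. For each $i$ introduce the univariate polynomial
\[ \delta_i(t_i) \coloneqq \prod_{x \in X_i \setminus \{a_i\}} \frac{t_i - x}{a_i - x}, \]
which has degree exactly $\# X_i - 1$, satisfies $\delta_i(a_i) = 1$ and $\delta_i(x) = 0$ for $x \in X_i \setminus \{a_i\}$, and has nonzero leading coefficient $\prod_{x \in X_i \setminus \{a_i\}} (a_i - x)^{-1}$. Then I would consider
\[ P \coloneqq \chi - v \prod_{i=1}^n \delta_i(t_i). \]
The first key step is to check that $P$ vanishes identically on $X$: at $a$ both terms equal $v$, and at any other $y \in X$ the term $\chi(y)$ vanishes by hypothesis while $\prod_i \delta_i(y_i)$ vanishes because some coordinate $y_{i_0} \neq a_{i_0}$. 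The second key step is a degree bookkeeping: $\prod_i \delta_i(t_i)$ has degree $\sum_i (\# X_i - 1)$ with the monomial $\prod_i t_i^{\# X_i - 1}$ occurring with nonzero coefficient, and since $\deg \chi < \sum_i (\# X_i - 1)$ the polynomial $P$ has degree \emph{exactly} $d_1 + \cdots + d_n$ with $d_i \coloneqq \# X_i - 1$, and the coefficient of $t_1^{d_1} \cdots t_n^{d_n}$ in $P$ is $-v$ times the nonzero product above, hence nonzero. Thus $P$ satisfies hypotheses (i) and (ii) of Theorem \ref{CN} with these $d_i$, and $\# X_i = d_i + 1$; applying Theorem \ref{CN} produces a point of $X$ at which $P$ does not vanish, contradicting the first step. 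Hence the set of points of $X$ where $\chi$ is nonzero cannot have cardinality exactly $1$.

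For part (b): I would apply part (a) to the Chevalley polynomial $\chi = \prod_{j=1}^r (1 - f_j^{q-1})$. As noted in the introduction, for every $x \in \F_q^n$ we have $\chi(x) = 1$ when $f_1(x) = \cdots = f_r(x) = 0$ and $\chi(x) = 0$ otherwise, so $\{x \in X \mid \chi(x) \neq 0\} = Z_X$. Since $\deg \chi \leq (q-1)\sum_{j=1}^r \deg(f_j)$, hypothesis \eqref{RESCHEVEQ} gives $\deg \chi < \sum_{i=1}^n (\# X_i - 1)$, and part (a) yields $\# Z_X \neq 1$.

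I do not expect a serious obstacle here: once Theorem \ref{CN} is available the whole argument is bookkeeping. The one point requiring care is ensuring that $P$ has degree \emph{exactly} $\sum_i (\# X_i - 1)$ and a nonzero coefficient on the top monomial $t_1^{d_1}\cdots t_n^{d_n}$ — this is precisely where the strict inequality on $\deg \chi$ (equivalently, on $(q-1)\sum_j \deg f_j$) is used, so that the interpolation term, not $\chi$, controls the leading behavior. (Edge cases such as $\chi \equiv 0$ or $X$ a single point are harmless: they force the nonvanishing set to be empty.)
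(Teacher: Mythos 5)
Your argument is correct and is exactly the "similar argument" the paper alludes to: the paper proves Theorem \ref{CW}a) from the Combinatorial Nullstellensatz by subtracting $\prod_i(1-t_i^{q-1})$, and your proof generalizes this by replacing that term with the Lagrange interpolation product $v\prod_i\delta_i$ at the putative unique point, which is the standard Schauz--Brink proof. The degree bookkeeping and the reduction of part (b) to part (a) via the Chevalley polynomial are both handled correctly.
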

\noindent
Just as parts b) and c) of Theorem \ref{CW} give two different generalizations of Theorem \ref{CW}a), it is natural to ask for restricted variable generalizations of Theorem \ref{CW}b) and of Theorem \ref{CW}c), each generalizing Theorem \ref{RESCHEV}b).  \\ \indent The latter has been attained:
in \cite[Thm. 1.6]{CFS17}, Clark-Forrow-Schmitt give a \textbf{Restricted Variable Warning Theorem}:\footnote{In fact \cite[Thm. 1.6]{CFS17} is a further ring theoretic generalization motivated by work of Brink \cite{Brink11}, but in the present paper we will only 
consider polynomials over a field.}
the conclusion of Theorem \ref{RESCHEV}b) is strengthened to: either $Z_X = \varnothing$ or $\# Z_X$ 
is at least a certain function of $n,d_1,\ldots,d_r,\# X_1,\ldots,\# X_n$ that is at least $2$ when (\ref{RESCHEVEQ}) holds. 
\\ \indent
The former case is addressed by the following result of \cite{Clark14}.

\begin{thm} \cite[Thm. 19]{Clark14} 
\label{RESCW0}
Let $f_1,\ldots,f_r \in \F_q[t]$ be polynomials of positive degrees.  For $1 \leq i \leq n$ let $X_i$ be a nonempty subset of $\F_q$.  
Put $X \coloneqq \prod_{i=1}^n X_i$ and 
\[ \varphi_i(t_i) \coloneqq \varphi_{X_i}(t_i) = \prod_{x_i \in X_i} (t_i-x_i). \]
Let 
\[ Z_X \coloneqq \{ x = (x_1,\ldots,x_n) \in X \mid P_1(x) = \ldots = P_r(x) = 0\}. \]
Suppose 
\begin{equation}
\label{RESCW0EQ1}
(q-1)\sum_{j=1}^r \deg(f_j) < \sum_{i=1}^n (\# X_i-1).
\end{equation}  
Then as elements of $\F_q$ we have 
\begin{equation}
\label{RESCW0EQ2}
 \sum_{x \in Z_X} \frac{1}{\prod_{i=1}^n \varphi_i'(x_i)} = 0. 
\end{equation}
\end{thm}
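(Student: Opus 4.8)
The plan is to reduce (\ref{RESCW0EQ2}) to a statement about the coefficients of the Chevalley polynomial
\[ \chi \coloneqq \prod_{j=1}^r \left(1 - f_j^{q-1}\right) \in \F_q[t_1,\ldots,t_n], \]
exactly as in the proof of Theorem \ref{RESCHEV}, but squeezing a sharper conclusion out of the argument. Since $\chi(x) = 1$ when $f_1(x) = \cdots = f_r(x) = 0$ and $\chi(x) = 0$ otherwise, for all $x \in \F_q^n$, and since $Z_X$ is precisely the set of $x \in X$ with $\chi(x) = 1$, the left-hand side of (\ref{RESCW0EQ2}) equals $\sum_{x \in X} \chi(x)/\prod_{i=1}^n \varphi_i'(x_i)$. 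Moreover $\deg \chi \leq (q-1)\sum_{j=1}^r \deg(f_j)$, which by (\ref{RESCW0EQ1}) is strictly less than $\sum_{i=1}^n (\#X_i - 1)$. So it is enough to prove the following: \emph{if $f \in \F_q[t_1,\ldots,t_n]$ has $\deg f < \sum_{i=1}^n(\#X_i - 1)$, then $\sum_{x \in X} f(x)/\prod_{i=1}^n \varphi_i'(x_i) = 0$}, and then apply it to $f = \chi$.

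I would prove this vanishing statement by linearity and the product structure of $X$. Writing $f = \sum_{\mathbf e} c_{\mathbf e}\, t_1^{e_1} \cdots t_n^{e_n}$ with the sum over tuples $\mathbf e = (e_1,\ldots,e_n) \in \N^n$ having $e_1 + \cdots + e_n \leq \deg f$, and using that $X = \prod_{i=1}^n X_i$ together with the factorization of $\prod_{i=1}^n \varphi_i'(x_i)$ into one-variable terms, one gets
\[ \sum_{x \in X} \frac{f(x)}{\prod_{i=1}^n \varphi_i'(x_i)} \;=\; \sum_{\mathbf e} c_{\mathbf e} \prod_{i=1}^n \left( \sum_{a \in X_i} \frac{a^{e_i}}{\varphi_i'(a)} \right). \]
The inner one-variable sum is a classical interpolation quantity: comparing the coefficient of $t^{\#X_i - 1}$ on the two sides of the Lagrange identity $t^k = \sum_{a \in X_i} a^k \prod_{b \in X_i,\, b \neq a} \frac{t-b}{a-b}$, valid for $0 \leq k \leq \#X_i - 1$, shows that $\sum_{a \in X_i} a^k/\varphi_i'(a) = 0$ whenever $k < \#X_i - 1$ (it equals $1$ for $k = \#X_i - 1$, and a complete homogeneous symmetric function of the elements of $X_i$ for larger $k$, though only the vanishing is needed here). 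Now for every tuple $\mathbf e$ appearing in $f = \chi$ we have $e_1 + \cdots + e_n \leq \deg\chi < \sum_{i=1}^n(\#X_i - 1)$, so we cannot have $e_i \geq \#X_i - 1$ for all $i$; picking an index $i_0$ with $e_{i_0} \leq \#X_{i_0} - 2$ makes the $i_0$-th factor of the product equal to $0$, hence the $\mathbf e$-term vanishes. Summing over $\mathbf e$ then gives (\ref{RESCW0EQ2}).

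There is essentially no hard step here: the whole argument is linear algebra over $\F_q$ once the one-variable interpolation identity is in hand (which can alternatively be quoted as a special case of Schauz's Coefficient Formula). The only points meriting a second look are bookkeeping ones — that $\deg(f_j^{q-1}) = (q-1)\deg(f_j)$ since $\F_q$ has no zero divisors, that the degree-of-a-product bound yields $\deg\chi \leq (q-1)\sum_j \deg(f_j)$, and that (\ref{RESCW0EQ1}) is exactly the strict inequality needed to force, in each monomial of $\chi$, some exponent $e_{i_0}$ to be at most $\#X_{i_0} - 2$.
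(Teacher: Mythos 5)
Your proof is correct and follows essentially the same route as the paper, which obtains (\ref{RESCW0EQ2}) by applying the Coefficient Formula of Schauz--Lason--Karasev--Petrov to the Chevalley polynomial $\chi = \prod_{j=1}^r(1-f_j^{q-1})$. The only difference is that you reprove the needed grid case of that formula inline -- via the Lagrange interpolation identity $\sum_{a \in X_i} a^k/\varphi_i'(a) = 0$ for $k < \#X_i - 1$ and the pigeonhole step forcing some exponent $e_{i_0} \leq \#X_{i_0}-2$ -- rather than quoting it as a black box, and that self-contained verification is carried out correctly.
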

\noindent
Theorem \ref{RESCW0} is proved using the Coefficient Formula \cite[Thm. 3]{Clark14}, a refinement of Theorem \ref{CN} due to  Schauz \cite{Schauz08}, Lason
\cite{Lason10} and Karasev-Petrov \cite{Karasev-Petrov12}.
\\ \indent
In the case that $X_1 = \ldots = X_n = \F_q$, we have $\varphi_i(t_i) = t_i^q-t_i$, so $\varphi_i'(t_i) = -1$.  In this case (\ref{RESCW0EQ2}) 
becomes the assertion that $(-1)^n \# Z = 0$ in $\F_q$, so $p \mid \# Z$, and we recover Theorem \ref{CW}b).   Moreover, 
(\ref{RESCW0EQ2}) certainly implies that $\# Z_X \neq 1$, so we recover Theorem \ref{RESCHEV}b).  Thus Theorem \ref{RESCW0} 
is a simultaneous generalization of the Chevalley-Warning Theorem and the Restricted Variable Chevalley Theorem, so in \cite{Clark14} this result is called the ``Restricted Variable Chevalley-Warning Theorem.''
\\ \indent
In this note we wish to reopen the question of what should, or could, constitute a Restricted Variable Chevalley-Warning Theorem.  A distinguishing feature of Theorem \ref{CW}b) is that it gives a ``$p$-adic inequality on $\# Z$'' -- i.e., a $p$-divisibility on $\# Z$ -- a feature that does not seem to be present in Theorem \ref{RESCW0}.  Clearly the condition (\ref{RESCW0EQ1}) of Theorem \ref{RESCW0} is not in general sufficient to deduce $p \mid \# Z_X$: for instance fix $1 \leq I \leq n$, suppose $0 \in X_I$ and 
take $r = 1$ and $f_1 = t_I$: then $Z_X = \{ (x_1,\ldots,x_n) \in 
\prod_{i=1}^n X_i \mid x_I = 0\}$, so $\# Z_X = \prod_{i \neq I} \# X_i$.  This and similar examples show that it is not reasonable 
to expect $Z_X$ to be divisible by $p$ unless $p \mid \# X_i$ for all $i$.  Moreover, beyond any condition on the sizes of the $X_i$'s, we want somehow to take their structure into account.  
\\ \\
Here is an example of how to do this, a generalization of Theorem \ref{CW}b) due to Aichinger-Moosbauer \cite[p. 62]{Aichinger-Moosbauer21}.

\begin{prop}
\label{COSETPROP}
In the setting of Theorem \ref{RESCW0}, suppose moreover that each $X_i$ is a coset: that is, 
there is a subgroup $G_i \subseteq (\F_q,+)$ and $a_i \in \F_q$ such that $X_i = a_i + G_i$ for all $1 \leq i \leq n$.  If (\ref{RESCW0EQ1}) holds, then 
$p \mid \# Z_X$.
\end{prop}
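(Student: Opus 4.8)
The plan is to derive Proposition \ref{COSETPROP} straight from Theorem \ref{RESCW0} by computing the derivatives $\varphi_i'$ in the coset case. Since translating $t_i \mapsto t_i + a_i$ alters neither the degrees of the $f_j$ nor $\# Z_X$, I may assume each $X_i = G_i$ is a subgroup of $(\F_q,+)$, so that $\varphi_i = \varphi_{G_i}$. The crux is the elementary claim: \emph{for a subgroup $G$ of $(\F_q,+)$, the polynomial $\varphi_G'$ takes one and the same nonzero value at every point of $G$.} Indeed, for each $b \in G$ the substitution $x \mapsto x-b$ permutes $G$, hence $\varphi_G(t+b) = \prod_{x\in G}(t+b-x) = \prod_{x'\in G}(t-x') = \varphi_G(t)$ as polynomials; differentiating and setting $t=0$ gives $\varphi_G'(b) = \varphi_G'(0)$. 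Moreover, since $0 \in G$ we may factor $\varphi_G(t) = t\prod_{0\neq x\in G}(t-x)$, so $\varphi_G'(0) = \prod_{0\neq x\in G}(-x)$ is a product of nonzero elements of $\F_q$, hence nonzero. (Equivalently: $\varphi_G$ is a separable additive polynomial, so $\varphi_G'$ is a nonzero constant.)

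Granting the claim, the proof is immediate. Hypothesis (\ref{RESCW0EQ1}) of Proposition \ref{COSETPROP} is precisely the hypothesis of Theorem \ref{RESCW0}, so that theorem gives $\sum_{x \in Z_X} \prod_{i=1}^n \varphi_i'(x_i)^{-1} = 0$ in $\F_q$. For every $x = (x_1,\dots,x_n) \in Z_X \subseteq \prod_i X_i$ and every $i$ we have $x_i \in G_i$, so $\varphi_i'(x_i) = c_i \coloneqq \varphi_{G_i}'(0)$; hence the left-hand side equals $\bigl(\prod_{i=1}^n c_i^{-1}\bigr)\cdot m$, where $m \in \F_q$ is the image of $\# Z_X$ under $\Z \to \F_q$. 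As $\prod_i c_i^{-1} \neq 0$, we conclude $m = 0$, i.e. $p \mid \# Z_X$.

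I do not expect a genuine obstacle here: the only content beyond invoking Theorem \ref{RESCW0} is the displayed claim, and that is exactly where the coset hypothesis is used. For a route independent of Theorem \ref{RESCW0}, one can instead fix an $\F_p$-linear isomorphism $\F_p^{m_i} \xrightarrow{\ \sim\ } G_i$ (a polynomial map of degree $1$, with $p^{m_i} = \# G_i$) and compose to rewrite $Z_X$ as the set of $\F_p$-points of a system $g_1 = \dots = g_r = 0$ in $N \coloneqq \sum_i m_i$ variables with $\deg g_j \leq \deg f_j$; since $p^{m_i}-1 \leq (q-1)m_i$ for each $i$, hypothesis (\ref{RESCW0EQ1}) forces $\sum_j \deg g_j \leq \sum_j \deg f_j < N$, so ordinary Chevalley–Warning over $\F_p$ (Theorem \ref{CW}b), the case of a constant $g_j$ being trivial) yields $p \mid \# Z_X$.
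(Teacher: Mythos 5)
Your main argument is correct and is essentially the paper's own proof: both rest on the observation that $\varphi_{X_i}'$ takes a single nonzero value on a coset $X_i = a_i + G_i$, after which Theorem \ref{RESCW0} gives $0 = \bigl(\prod_i c_i\bigr)^{-1}\cdot \# Z_X$ in $\F_q$ and hence $p \mid \# Z_X$. Your translation reduction and your derivation of constancy from the polynomial identity $\varphi_G(t+b)=\varphi_G(t)$ are only cosmetic variants of the paper's direct computation $\varphi_{X_i}'(x_i) = \prod_{y \in X_i \setminus \{x_i\}}(x_i-y) = \prod_{x \in G_i \setminus \{0\}} x$.

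One caution about the ``route independent of Theorem \ref{RESCW0}'' offered at the end: as written it does not work when $q > p$. After substituting the $\F_p$-linear parametrizations $\F_p^{m_i} \to G_i$, the composite polynomials $g_j$ have coefficients in $\F_q$, not in $\F_p$, so Theorem \ref{CW}b) over $\F_p$ cannot be applied directly to the system $g_1 = \cdots = g_r = 0$ in $N$ variables. One must first expand each equation $g_j = 0$ into $a = [\F_q:\F_p]$ component equations over $\F_p$ of the same degree, which replaces the required inequality $\sum_j \deg g_j < N$ by $a\sum_j \deg g_j < N$. This stronger inequality does follow from (\ref{RESCW0EQ1}), but via the convexity estimate $\frac{p^{m_i}-1}{m_i} \leq \frac{p^a-1}{a}$ (valid because $m_i \leq a$), not from the bound $p^{m_i}-1 \leq (q-1)m_i$ that you invoke; this is exactly the ``elementary convexity argument'' of Aichinger--Moosbauer mentioned in the remark following Proposition \ref{AMGROUPPROP}. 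Since that alternative is only an aside, your proof of the Proposition itself stands.
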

\begin{proof} 
If for all $1 \leq i \leq n$ there is $c_i \in \F_q^{\times}$ such that for all $x_i \in X_i$ we have $\varphi_i'(x_i) = c_i$, then we have 
\[ \sum_{x \in Z_X} \frac{1}{\prod_{i=1}^n \varphi_i'(x_i)} = \sum_{x \in Z_X} \frac{1}{c_1 \cdots c_n} = \frac{ \# Z_X}{c_1 \cdots c_n}, \]
so when we apply Theorem \ref{RESCW0} the conclusion is $0 = \frac{\# Z_X}{c_1 \cdots c_n}$ and thus $p \mid \# Z_X$.  \\ \indent
For any finite nonempty subset $X$ of a field $F$, if $\varphi_X(t) \coloneqq \prod_{x \in X} (t-x)$, then for all $x \in X$ we have 
\[ \varphi_X'(x) = \prod_{y \in X \setminus \{x\}} (x-y). \]
Taking  $X = X_i = a_i + G_i$, for all $x_i \in a_i + G_i$ we get 
\[ \varphi_{X_i}'(x_i) = \prod_{y \in (a_i + G_i) \setminus \{x_i\}} (x_i-y) = \prod_{x \in G_i \setminus \{0\}} x \eqqcolon c_i; \]
that is to say, the value of $\varphi_{X_i}'(x_i)$ does not depend upon the choice of $x_i \in X_i$.  
\end{proof}
\noindent
The argument of Proposition \ref{COSETPROP} will work whenever $X = \prod_{i=1}^n X_i$ and for all $1 \leq i \leq n$ we 
have that $\varphi_{X_i}'(x) = \varphi_{X_i}'(y)$ for all $x,y \in X_i$.  Our first main result is that, for any field $F$ 
of characteristic $p > 0$ and any finite subset $X \subset F$ of cardinality at least $2$, we have $\varphi_X'(x) = \varphi_X'(y)$ for all $x,y \in X$ if and only if $X$ has size divisible by $p$ and is a \textbf{Vandermonde set}\footnote{In \S 2 we will give a self-contained treatment of Vandermonde sets.} in the sense of G\'acs-Weiner \cite{GW03} and Sziklai-Tak\'ats \cite{Sziklai-Takats08}.    
We deduce the following result.

\begin{thm}[Restricted Chevalley-Warning for Vandermonde Sets]
\label{VCW}
For $1 \leq i \leq n$, let $X_i \subset \F_q$ be a Vandermonde set of size divisible by $p$.   Let $f_1,\ldots,f_r \in \F_q[t_1,\ldots,t_n]$ be polynomials of positive degree such that
\[ (q-1)\sum_{j=1}^r \deg(f_j) < \sum_{i=1}^n (\# X_i-1). \]
Let 
\[ Z_X \coloneqq  \{ x \in \prod_{i=1}^n X_i \mid P_1(x) = \ldots = P_r(x) = 0\}. \]
Then $p \mid \# Z_X$.
\end{thm}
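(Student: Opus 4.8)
The plan is to combine Theorem~\ref{RESCW0} with the characterization of Vandermonde sets announced just before the statement, exactly along the lines of the proof of Proposition~\ref{COSETPROP}. First I would recall that for each $i$ with $1 \leq i \leq n$, since $X_i$ is a Vandermonde set of size divisible by $p$ and $\# X_i \geq p \geq 2$, the ``first main result'' of the paper tells us that $\varphi_{X_i}'(x)$ takes a single value $c_i \in \F_q$ as $x$ ranges over $X_i$. The key subtlety here is that we need $c_i \neq 0$: this holds because $\varphi_{X_i}$ has distinct roots (the elements of $X_i$), so $\varphi_{X_i}'(x) = \prod_{y \in X_i \setminus \{x\}}(x-y) \neq 0$ for every $x \in X_i$. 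Hence $c_i \in \F_q^\times$.

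Next I would verify that the hypotheses of Theorem~\ref{RESCW0} are met: the $f_j$ have positive degrees, each $X_i$ is a nonempty subset of $\F_q$, and the degree inequality $(q-1)\sum_{j=1}^r \deg(f_j) < \sum_{i=1}^n(\# X_i - 1)$ is precisely hypothesis~(\ref{RESCW0EQ1}). Therefore Theorem~\ref{RESCW0} applies and yields the identity, valid in $\F_q$,
\[
\sum_{x \in Z_X} \frac{1}{\prod_{i=1}^n \varphi_i'(x_i)} = 0.
\]
Now I substitute $\varphi_i'(x_i) = c_i$ for every $x = (x_1,\ldots,x_n) \in Z_X \subseteq \prod_{i=1}^n X_i$, so each summand equals $(c_1 \cdots c_n)^{-1}$, and the sum collapses to $\# Z_X \cdot (c_1 \cdots c_n)^{-1} = 0$ in $\F_q$. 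Since $c_1 \cdots c_n \in \F_q^\times$ is invertible, we conclude $\# Z_X \equiv 0$ in $\F_q$, i.e.\ $p \mid \# Z_X$.

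The only genuine content beyond Theorem~\ref{RESCW0} is the input that, for a Vandermonde set $X_i$ of size divisible by $p$, the derivative $\varphi_{X_i}'$ is constant on $X_i$ — and this is exactly the ``first main result'' promised in the paragraph preceding the theorem, so in the write-up I would simply cite it (once it is formally stated in \S2). There is essentially no obstacle in the deduction itself; the one thing to be careful about is the degenerate case $\# X_i = 1$, which is excluded because $p \mid \# X_i$ forces $\# X_i \geq p \geq 2$, so the characterization applies uniformly to every factor. I would also remark, as the discussion before the theorem does, that this recovers Proposition~\ref{COSETPROP} since a coset $a_i + G_i$ is a Vandermonde set whenever $p \mid \# G_i$, and recovers Theorem~\ref{CW}b) when every $X_i = \F_q$.
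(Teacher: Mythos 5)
Your proposal is correct and follows essentially the same route the paper takes: it invokes the characterization (Theorem~\ref{KEYVANDERLEMMA}) that $\varphi_{X_i}'$ is a nonzero constant on a Vandermonde set of size divisible by $p$, and then runs the argument of Proposition~\ref{COSETPROP} through Theorem~\ref{RESCW0}, which is exactly how the paper deduces Theorem~\ref{VCW} in \S 2. (The paper also records a second derivation via Lemma~\ref{NEWRCW5} and Theorem~\ref{RESCW}, but your argument coincides with the one the paper announces in the introduction and after Theorem~\ref{KEYVANDERLEMMA}.)
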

\noindent
We also give some examples of Vandermonde subsets of $\F_q$ of cardinality divisible by $p$ that are not cosets of additive subgroups.
\\ \\
Let $F$ be a field of characteristic $p > 0$.  In \S 3, for any $n \in \Z^+$ and any finite, nonempty subset $X \subset F^n$,
we define an invariant $\uomega(X) \in \N$ that is (almost) a multivariate generalization of the invariant $\omega(Y)$ of a subset 
$Y \subset \F_q$ considered by G\'acs-Weiner and Sziklai-Tak\'ats.  We show the following result:

\begin{thm}
\label{RESCW}
Let $X \subset \F_q^n$ be a nonempty subset, and let $f_1,\ldots,f_r \in \F_q[t_1,\ldots,t_n]$ be polynomials of positive degree.  If 
\begin{equation}
\label{RESCW1EQ}
(q-1) \sum_{j=1}^r \deg(f_j) < \uomega(X), 
\end{equation}
then 
\[ p \mid \# Z_X = \# \{ x \in X \mid f_1(x) = \ldots = f_r(x) = 0 \}. \]
\end{thm}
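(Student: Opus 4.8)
The plan is to imitate the standard proof of Theorem \ref{CW}b): express $\# Z_X \bmod p$ as a sum of the Chevalley polynomial over $X$, and then force that sum to vanish using a degree bound -- here the bound coming from $\uomega(X)$ in place of the usual vanishing of power sums over $\F_q^n$. Form the Chevalley polynomial $\chi \coloneqq \prod_{j=1}^r (1 - f_j^{q-1}) \in \F_q[t_1,\dots,t_n]$. Since $a^{q-1} = 1$ for $a \in \F_q^\times$ and $0^{q-1} = 0$ (as $q-1 \geq 1$), for every $x \in \F_q^n$ the value $\chi(x) \in \{0,1\}$ is $1$ precisely when $f_1(x) = \dots = f_r(x) = 0$. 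Summing over $X$,
\[ \# Z_X \equiv \sum_{x \in X} \chi(x) \pmod p, \]
the right-hand side being read in $\F_q$; so it suffices to show that this sum is $0$ in $\F_q$.

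Next I would handle the degree bookkeeping. Each $f_j$ has positive degree and $q \geq 2$, so $1 - f_j^{q-1}$ has degree exactly $(q-1)\deg f_j \geq 1$; multiplying, $\deg \chi = (q-1)\sum_{j=1}^r \deg f_j$, which by hypothesis (\ref{RESCW1EQ}) is strictly less than $\uomega(X)$. (If convenient one may first replace $\chi$ by its reduction modulo $(t_1^q - t_1, \dots, t_n^q - t_n)$; this alters neither the function induced on $\F_q^n \supseteq X$ nor the degree bound, since reduction does not raise the degree.) Now I would apply the property for which $\uomega(X)$ is built in \S 3 -- namely that $\sum_{x \in X} g(x) = 0$ in $\F_q$ for every $g \in \F_q[t_1,\dots,t_n]$ with $\deg g < \uomega(X)$ -- to $g = \chi$. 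This gives $\sum_{x \in X}\chi(x) = 0$, hence $p \mid \# Z_X$, as desired.

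I do not expect a genuine obstacle in the argument itself -- it is bookkeeping once $\uomega(X)$ is in hand -- so the delicate point is really the \S 3 definition rather than anything here. The key subtlety is that the sum over $X$ must collapse \emph{completely}: the constant term of $\chi$ contributes $\chi(0)\cdot \# X$ to $\sum_{x \in X}\chi(x)$, so $\uomega(X)$ should be defined so that $\uomega(X) \geq 1$ already forces $p \mid \# X$ -- as it will be, the constant polynomial $1$ having $X$-sum equal to $\# X$. Granting this, the inequality $\deg \chi < \uomega(X)$ does all the work. The substantive effort that makes Theorem \ref{RESCW} non-vacuous then lies elsewhere: in the lower bounds on $\uomega(X)$ developed in \S 4, which must in particular give $\uomega\!\left(\prod_{i=1}^n X_i\right) \geq \sum_{i=1}^n(\# X_i - 1)$ for products of $p$-divisible Vandermonde sets, thereby recovering Theorem \ref{VCW} and -- with all $X_i = \F_q$ -- Theorem \ref{CW}b).
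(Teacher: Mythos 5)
Your argument is exactly the paper's proof: form the Chevalley polynomial $\chi = \prod_{j=1}^r(1-f_j^{q-1})$, note $\deg\chi = (q-1)\sum_j \deg f_j < \uomega(X)$, and invoke the defining property of $\uomega(X)$ (namely that $\int_X P = 0$ for all $P$ of degree less than $\uomega(X)$) to get $\# Z_X = \int_X \chi = 0$ in $\F_q$. The proposal is correct; the optional reduction modulo $(t_1^q-t_1,\dots,t_n^q-t_n)$ is harmless but not needed.
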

\noindent
Of course this focuses attention on what we know about $\uomega(X)$.  We have $\uomega(X) \geq 1$ iff $p \mid \# X$.  
Moreover, if $X_i$ is the 
projection of $X$ onto its $i$th factor, then we have 
\begin{equation}
\label{OPTIMALEQ1}
 \uomega(X) \leq \sum_{i=1}^n (\# X_i-1). 
\end{equation}
We call a subset $X \subset F^n$ \textbf{optimal} if equality occurs in (\ref{OPTIMALEQ1}).  In Lemma \ref{NEWRCW5} we show that if $X = \prod_{i=1}^n X_i$ and each $X_i$ is Vandermonde of size divisible by $p$, then $X$ is otpimal.  Thus Theorem \ref{RESCW} implies 
Theorem \ref{VCW}.  Because of this we suggest that Theorem \ref{RESCW} is more deserving of the name ``Restricted Variable Chevalley-Warning Theorem'' than Theorem \ref{RESCW0}.  

\begin{remark}
The main observation of this paper is that a Restricted Variable Chevalley Theorem for a subset $X \subset \F_q^n$ should take into 
account the ``structure'' of $X$ via an invariant that measures the vanishing of certain symmetric functions summed over $X$.   This observation is also the point of departure of a recent preprint of Nica \cite{Nica21}.  The main difference is that while we are interested 
in restricted variable theorems whose conclusion is $p \mid \# Z_X$, Nica is interested in improvements of the Combinatorial Nullstellensatz and the Coefficient Formula that take the structure of the ``grid'' $X = \prod_{i=1}^n X_i$ into account.
\\ \indent
His work and ours were done independently, and despite the similarity of content, have virtually no overlap.\footnote{In both \cite{Nica21} 
and the present work, Newton's identities are used to relate vanishing of power sums to vanishing of symmetric functions...as surely many others have done as well.}
\end{remark}

\section{Vandermonde sets}
\noindent
Vandermonde subsets of finite fields were defined by G\'acs-Weiner \cite{GW03} in characteristic $2$ and by Sziklai-Tak\'ats \cite{Sziklai-Takats08} in all characteristics.  In this section we study Vandermonde subsets over any field $F$ of characteristic 
$p > 0$.  

\subsection{Vandermonde Sets and their Polynomials}
Let $Y$ be a finite subset of $F$ of cardinality $r \geq 1$.   For $k \in \Z^+$ we put 
\[\pi_k(Y) \coloneqq \sum_{y \in Y} y^k. \]

\begin{lemma}
\label{SZIKLAI1}
Let $\{0\} \neq Y \subset F$ be a finite subset of size $r \geq 1$.
\begin{itemize}
\item[a)] If $p \mid r$ then $\pi_k(Y) \neq 0$ for some $1 \leq k \leq r-1$.  
\item[b)] In general we have $\pi_k(Y) \neq 0$ for some $1 \leq k \leq r$.
\end{itemize}
\end{lemma}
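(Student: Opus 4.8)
The plan is to reduce both parts to the invertibility of a Vandermonde matrix with distinct nodes. Write $Y=\{y_1,\dots,y_r\}$ with the $y_i$ pairwise distinct. The key fact is that the $r\times r$ matrix $V\coloneqq(y_i^{\,j-1})_{1\le i,j\le r}$ has $\det V=\prod_{1\le i<j\le r}(y_j-y_i)\neq 0$, so the $F$-linear map $\Phi\colon F^r\to F^r$, $(c_1,\dots,c_r)\mapsto\big(\sum_{i=1}^r c_i\,y_i^{\,k}\big)_{0\le k\le r-1}$, is bijective. Since $\Phi(1,\dots,1)=\big(r\cdot 1_F,\;\pi_1(Y),\dots,\pi_{r-1}(Y)\big)$ and $(1,\dots,1)\neq 0$, at least one of $r\cdot 1_F,\pi_1(Y),\dots,\pi_{r-1}(Y)$ is nonzero. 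Everything else is bookkeeping.

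For part a): the hypothesis $p\mid r$ says precisely that $r\cdot 1_F=0$, so the previous line forces $\pi_k(Y)\neq 0$ for some $k$ with $1\le k\le r-1$ (the range being nonempty because $p\mid r$ and $r\ge 1$ force $r\ge 2$). In other words, the role of $p\mid r$ is exactly to promote the trivial identity $\pi_0(Y)=r$ to a genuine vanishing, which ``uses up'' the $k=0$ coordinate.

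For part b) we may no longer assume $r\cdot 1_F=0$, so instead we delete $0$ from $Y$: set $Y^{*}\coloneqq Y\setminus\{0\}$ and $s\coloneqq\#Y^{*}$. Since $Y$ is nonempty and $Y\neq\{0\}$ we have $s\ge 1$, and, since $0^k=0$ for $k\ge1$, $\pi_k(Y)=\pi_k(Y^{*})$ for every $k\ge 1$. Now re-run the argument with the $s\times s$ matrix $M\coloneqq(y^{\,k})_{y\in Y^{*},\,1\le k\le s}$ in place of $V$: here the exponents run from $1$ to $s$, so $M$ equals the Vandermonde matrix on $Y^{*}$ times the diagonal matrix with entries the elements of $Y^{*}$, and $\det M=\big(\prod_{y\in Y^{*}}y\big)\prod_{\{y,y'\}\subseteq Y^{*},\,y\ne y'}(y-y')\neq 0$ because those elements are distinct \emph{and} nonzero. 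Hence the image of the all-ones vector, namely $\big(\pi_1(Y^{*}),\dots,\pi_s(Y^{*})\big)$, is nonzero; since $s\le r$ and $\pi_k(Y^{*})=\pi_k(Y)$, we conclude $\pi_k(Y)\neq 0$ for some $1\le k\le r$.

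I do not anticipate a genuine obstacle; the only thing to watch is the bookkeeping that separates the two bounds ($r-1$ in a), $r$ in b)), which is precisely the question of whether the equation $r\cdot 1_F=0$ is available. For completeness I would note that the lemma can instead be approached through Newton's identities for $\varphi_Y$ — equivalently by expanding $\varphi_Y'/\varphi_Y=\sum_{k\ge 0}\pi_k(Y)\,t^{-k-1}$ at infinity — the upshot for a) being that the hypotheses force $\varphi_Y'=0$, so $\varphi_Y$ would be a $p$-th power, contradicting that it is squarefree of positive degree; but the Vandermonde argument above is shorter and avoids the characteristic-$p$ subtleties of Newton's identities.
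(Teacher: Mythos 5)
Your proof is correct and is essentially the paper's own argument: both parts apply the (column-scaled) Vandermonde matrix on the elements of $Y$ to the all-ones vector and invoke its nonsingularity, with $p\mid r$ killing the $k=0$ coordinate in part a). The only cosmetic difference is that you strip out $0$ at the outset of part b), whereas the paper first treats the case $0\notin Y$ and then reduces the case $0\in Y$ to it.
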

\begin{proof}
Write $Y = \{y_1,\ldots,y_r\}$.  \\
a) Consider the $r \times r$ Vandermonde matrix $V = V(y_1,\ldots,y_r)$ associated to $y_1,\ldots,y_r$, where the $i$-th row of $V$ is $(y_1^{i - 1}, \ldots, y_r^{i-1})$.  
Say $\pi_k(Y) = 0$ for 
all $1 \leq k \leq r-1$.
Then $V$ evaluated at the column vector $(1,\ldots,1)^T$ is $0$, since the the first entry of the 
product is $r = 0 \in F$ as $p \mid r$ and rest of the entries are $\pi_k(Y)$ for $1 \leq k \leq r-1$.  This contradicts the nonsingularity of $V$.  \\
b) Suppose first that $0 \notin Y$, and consider the matrix $\tilde{V}$ obtained from $V$ by multiplying the $j$th column by $y_j$, so 
$\det \tilde{V} = y_1 \cdots y_r \det(V) \neq 0$.   Again we must have that $\tilde{V} (1,\ldots,1)^T \neq 0$ which means that 
$\pi_k(Y) \neq 0$ for some $1 \leq k \leq r$.  If $0 \in Y$, let $Y^{\bullet} \coloneqq Y \setminus \{0\}$.   As we have just shown, 
there is $1 \leq k \leq r-1$ such that $\pi_k(Y^{\bullet}) \neq 0$, so then $\pi_k(Y) = \pi_k(Y^{\bullet}) \neq 0$.  
\end{proof}
\noindent
For the rest of this section $Y$ will denote a finite subset of $F$ of size $r \geq 2$.  \\ \indent
We denote by $\omega(Y)$ the least $k \in \Z^+$ such that $\pi_k(Y) \neq 0$.  Lemma \ref{SZIKLAI1} gives
\[\omega(Y) \leq \begin{cases}  r & \text{always} \\  r - 1 & \text{if } p \mid \# Y \end{cases}. \]  Following Sziklai-Tak\'ats \cite{Sziklai-Takats08} we say that $Y$ is \textbf{Vandermonde} if $\omega(Y) = r-1$ and is \textbf{super-Vandermonde} if $\omega(Y) = r$.  

\begin{lemma}
\label{SZIKLAI1.5}
Let $Y \subset F$ be finite of cardinality $r \geq 2$.  
\begin{itemize}
\item[a)] For all $k \in \Z^+$ and $\alpha \in F^{\times}$ we have $\pi_k(Y) = 0 \iff \pi_k(\alpha Y) = 0$.  Thus
$\omega(Y) = \omega(\alpha Y)$ and $Y$ is Vandermonde $\iff$ $\alpha Y$ is Vandermonde.  
\item[b)] Suppose $Y$ is Vandermonde.  Then for $\beta \in F^{\times}$ the translate $Y + \beta$ is Vandermonde iff $p \mid r$.  
\end{itemize}
\end{lemma}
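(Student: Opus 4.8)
The plan is to track how the power sums $\pi_k(Y)$ transform under scaling and under translation, and then simply read off the behavior of $\omega$. Part a) is a one-line computation: for $\alpha \in F^{\times}$ and $k \in \Z^+$ we have
\[ \pi_k(\alpha Y) = \sum_{y \in Y} (\alpha y)^k = \alpha^k \, \pi_k(Y), \]
and since $\alpha^k \in F^{\times}$ the right side vanishes exactly when $\pi_k(Y) = 0$. Taking $k$ to be the smallest index with $\pi_k \neq 0$ gives $\omega(\alpha Y) = \omega(Y)$; as $\#(\alpha Y) = \# Y = r$, the equivalence of ``$Y$ Vandermonde'' and ``$\alpha Y$ Vandermonde'' is then immediate from the definition $\omega(\cdot) = r - 1$.

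For part b), the engine is the binomial expansion. Setting $\pi_0(Y) \coloneqq \sum_{y \in Y} y^0 = r \cdot 1_F$, we get for every $k \in \Z^+$ and $\beta \in F$
\[ \pi_k(Y + \beta) = \sum_{y \in Y} (y + \beta)^k = \sum_{j=0}^{k} \binom{k}{j} \beta^{\,k-j} \pi_j(Y). \]
I would now invoke the hypothesis that $Y$ is Vandermonde, i.e.\ $\pi_1(Y) = \dots = \pi_{r-2}(Y) = 0$ while $\pi_{r-1}(Y) \neq 0$. This collapses the sum: for $1 \leq k \leq r-2$ only the $j = 0$ term survives, so $\pi_k(Y+\beta) = r\beta^k$; and for $k = r-1$ only the $j = 0$ and $j = r-1$ terms survive, so $\pi_{r-1}(Y+\beta) = r\beta^{r-1} + \pi_{r-1}(Y)$.

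Both implications now fall out. If $p \mid r$, then $r = 0$ in $F$, hence $\pi_k(Y+\beta) = 0$ for all $1 \leq k \leq r-2$ while $\pi_{r-1}(Y+\beta) = \pi_{r-1}(Y) \neq 0$; thus $\omega(Y+\beta) = r-1$ and $Y + \beta$ is Vandermonde, for \emph{every} $\beta \in F$. Conversely, if $p \nmid r$ I would produce $\beta \in F^{\times}$ for which $Y+\beta$ fails to be Vandermonde: for $r \geq 3$ any nonzero $\beta$ works, since $\pi_1(Y+\beta) = r\beta \neq 0$ forces $\omega(Y+\beta) = 1 < r-1$; and for $r = 2$ (so $p \neq 2$) the choice $\beta = -\pi_1(Y)/2$, a unit because $\pi_1(Y) \neq 0$, gives $\pi_1(Y+\beta) = 2\beta + \pi_1(Y) = 0$, making $Y + \beta$ super-Vandermonde. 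The computations are entirely routine; the one point requiring care is this last edge case, which shows that the statement must be read with the property ``$Y+\beta$ is Vandermonde'' quantified over all $\beta \in F^{\times}$ (for $r = 2$, $p \nmid r$, a generic translate is still Vandermonde, but not every one is).
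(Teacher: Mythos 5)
Your proof is correct and follows the same route as the paper's: part a) is the identity $\pi_k(\alpha Y) = \alpha^k \pi_k(Y)$, and part b) is the binomial expansion $\pi_k(Y+\beta) = \sum_{j=0}^{k} \binom{k}{j}\beta^{k-j}\pi_j(Y)$ collapsed by the Vandermonde hypothesis. You are, however, more careful than the paper on two points. First, in the forward direction the paper only records that $\pi_k(Y+\beta)=r\beta^k=0$ for $1 \leq k \leq r-2$ when $p \mid r$; to conclude Vandermonde one must also rule out $\omega(Y+\beta)=r$, which you do directly via $\pi_{r-1}(Y+\beta)=r\beta^{r-1}+\pi_{r-1}(Y)=\pi_{r-1}(Y)\neq 0$ (alternatively this follows from Lemma \ref{SZIKLAI1}a), since $p \mid \#(Y+\beta)$). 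Second, and more substantively, your $r=2$ caveat exposes a genuine defect that the paper's proof does not address: when $r=2$ the paper's argument ranges over the empty set of indices $1 \leq k \leq r-2$ and proves nothing, and as you observe, for $p$ odd and $Y=\{y_1,y_2\}$ with $y_1+y_2\neq 0$, every translate except the single choice $\beta=-(y_1+y_2)/2$ is again Vandermonde, so the ``only if'' direction of the lemma, read for each individual $\beta \in F^{\times}$, is false in that case. Your repaired reading (quantifying over all $\beta\in F^{\times}$), or equivalently adding the hypothesis $r\geq 3$, makes the statement true, and your argument proves it; since the lemma is not used elsewhere in a way that depends on the $r=2$ case, this is a correction to the statement rather than a problem with your proof.
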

\begin{proof}
a) Indeed $\pi_k(\alpha Y) = \alpha^k \pi_k(Y)$, and the rest is clear. \\
b) If $Y = \{y_1,\ldots,y_r\}$ then for all $1 \leq k \leq r-2$, expanding out each term in
\[ \pi_k(Y + \beta) = \sum_{i=1}^r (y_i + \beta)^k \]
gives a linear combination of $\pi_j(Y)$ for $j \leq k$ together with the final term $r \beta^k$, which is zero iff 
$p \mid r$.  
\end{proof}

\begin{prop}
\label{SZIKLAI2}
Let $F$ be a field of characteristic $p > 0$, let $Y = \{y_1,\ldots,y_r\} \subset F$ be finite of cardinality $r \geq 2$, and write 
\[\varphi_Y(t) = \prod_{i=1}^r (t-y_i) = t^r + \sum_{i=0}^{r-1} a_i t^i. \]
\begin{itemize}
\item[a)] The following are equivalent: 
\begin{itemize}
\item[(i)] The subset $Y$ is Vandermonde or super-Vandermonde. 
\item[(ii)] For $2 \leq i \leq r-1$, if $p \nmid i$ 
then $a_i = 0$. 
\end{itemize}
\item[b)]
The following are equivalent: 
\begin{itemize}
\item[(i)] The subset $Y$ is super-Vandermonde.  
\item[(ii)] For $1 \leq i \leq r-1$, if $p \nmid i$ then $a_i = 0$.  
\end{itemize}
\end{itemize}
\end{prop}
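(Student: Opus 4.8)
The plan is to derive both equivalences from Newton's identities relating the power sums $\pi_k = \pi_k(Y)$ to the elementary symmetric functions $e_k = e_k(y_1,\dots,y_r)$. Since $\varphi_Y(t) = \prod_{i=1}^{r}(t-y_i) = \sum_{k=0}^{r}(-1)^k e_k\, t^{r-k}$ (with $e_0 = 1$), we have $a_{r-k} = (-1)^k e_k$ for $1 \leq k \leq r-1$, so the vanishing conditions on the $a_i$ in (a)(ii) and (b)(ii) are, under the substitution $k = r-i$, exactly conditions on the vanishing of certain $e_k$. The tool is Newton's identity, valid over any commutative ring and in particular over $F$: for each $1 \leq m \leq r$,
\[ \pi_m - e_1\pi_{m-1} + e_2\pi_{m-2} - \cdots + (-1)^{m-1} e_{m-1}\pi_1 + (-1)^m m\, e_m = 0. \]

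First I would isolate one elementary observation, valid for every integer $N$ with $1 \leq N \leq r$: the conditions (1) $\pi_1 = \cdots = \pi_N = 0$; (2) $m\, e_m = 0$ in $F$ for all $1 \leq m \leq N$; and (3) $e_m = 0$ for all $1 \leq m \leq N$ with $p \nmid m$, are equivalent. For $(1)\Rightarrow(2)$, in Newton's identity at index $m \leq N$ every term other than $(-1)^m m\, e_m$ carries a factor $\pi_j$ with $1 \leq j \leq m \leq N$, hence vanishes, so $m\, e_m = 0$. For $(2)\Rightarrow(1)$, induct on $k \leq N$: granting $\pi_1 = \cdots = \pi_{k-1} = 0$, Newton's identity at index $k$ collapses to $\pi_k = (-1)^{k+1} k\, e_k$, which is $0$ by (2). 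And $(2)\Leftrightarrow(3)$ is clear, since $m\, e_m = 0$ holds automatically when $p \mid m$ and is equivalent to $e_m = 0$ when $p \nmid m$.

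It then remains to express the Vandermonde hypotheses as vanishing of initial power sums. Since $r \geq 2$ forces $Y \neq \{0\}$, Lemma \ref{SZIKLAI1}(b) gives $\omega(Y) \leq r$; hence "$Y$ is Vandermonde or super-Vandermonde" is precisely $\omega(Y) \geq r-1$, i.e.\ $\pi_1 = \cdots = \pi_{r-2} = 0$, while "$Y$ is super-Vandermonde" is precisely $\omega(Y) = r$, i.e.\ $\pi_1 = \cdots = \pi_{r-1} = 0$; in neither case need one separately verify $\pi_{r-1} \neq 0$ or $\pi_r \neq 0$, as these are forced by $\omega(Y) \leq r$. Part (a) is then the observation with $N = r-2$ and part (b) the observation with $N = r-1$, once condition (3) is transported to the coefficients via $a_{r-k} = (-1)^k e_k$: as $k$ runs through $1 \leq k \leq r-2$ (resp.\ $1 \leq k \leq r-1$) the coefficient index $i = r-k$ runs through $2 \leq i \leq r-1$ (resp.\ $1 \leq i \leq r-1$), and the condition $p \nmid k$ transcribes into a condition on $i$.

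I expect the one point needing care to be the index bookkeeping — keeping straight which coefficient $a_i$ of $\varphi_Y$ records which elementary symmetric function $e_{r-i}$, so that the divisibility condition coming out of Newton's identities is attached to the right coefficient. The conceptual content, already visible in the observation above, is that over a field of characteristic $p$ Newton's identity at index $m$ determines only $m\, e_m$, so it says nothing about $e_m$ precisely when $p \mid m$ — exactly the slack that the Vandermonde and super-Vandermonde conditions are measuring. Everything else is formal: the two-way induction through Newton's identities, plus the single appeal to Lemma \ref{SZIKLAI1} ensuring $\omega(Y)$ is finite and at most $r$, which is what spares us from separately checking non-vanishing of $\pi_{r-1}$ (resp.\ $\pi_r$).
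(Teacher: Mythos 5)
Your outline follows the paper's own route exactly: Newton's identities give the equivalence of $\pi_1=\cdots=\pi_N=0$ with the vanishing of $e_m$ for all $1\le m\le N$ with $p\nmid m$, and the appeal to Lemma \ref{SZIKLAI1}b) (legitimate since $r\ge 2$ forces $Y\ne\{0\}$) correctly converts ``Vandermonde or super-Vandermonde'' into $\pi_1=\cdots=\pi_{r-2}=0$ and ``super-Vandermonde'' into $\pi_1=\cdots=\pi_{r-1}=0$; that conversion is a detail the paper glosses over and you handle it well. The gap is the one step you explicitly postpone, namely that ``the condition $p\nmid k$ transcribes into a condition on $i$.'' Carried out under $i=r-k$, the condition $p\nmid k$ becomes $p\nmid r-i$, \emph{not} the condition $p\nmid i$ appearing in (a)(ii) and (b)(ii). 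The two agree precisely when $p\mid r$. What your argument actually proves is: $Y$ is Vandermonde or super-Vandermonde iff $a_i=0$ for all $2\le i\le r-1$ with $p\nmid r-i$, and super-Vandermonde iff the same holds for all $1\le i\le r-1$.

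When $p\nmid r$ the statement as printed is false, so the transcription cannot be completed as you hope. Take $p=2$ and let $Y\subset\F_8$ be the three roots of the irreducible polynomial $t^3+t+1$. Then $e_1=0$, so $\pi_1=0$ and $\pi_2=\pi_1^2=0$, while each root satisfies $y^3=y+1$, so $\pi_3=\pi_1+3=1\ne 0$; hence $\omega(Y)=3=r$ and $Y$ is super-Vandermonde. But $a_1=1\ne 0$ with $p\nmid 1$, so (b)(ii) fails. (The re-indexed condition instead demands $a_2=0$, which holds.) So you should either prove the corrected statement with $p\nmid r-i$, or restrict to the case $p\mid r$, where the printed statement is correct; note that the only place the paper uses this proposition is Theorem \ref{KEYVANDERLEMMA}, which assumes $p\mid\# Y$ and hence sits in the case where the two index conditions coincide.
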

\begin{proof}
For independent indeterminates $x_1,\ldots,x_r$ and $0 \leq k \leq r$ let $s_k(x_1,\ldots,x_r)$ be the $k$th elementary symmetric function and let $\pi_k(x_1,\ldots,x_r) = \sum_{i=1}^r x_i^k$ be the $k$th power sum.  
Note that $a_i = (-1)^{r - i} s_{r - i}(x_1, \dots, x_r)$, for $0 \leq i \leq r - 1$. 
In the ring $\Z[x_1,\ldots,x_r]$ we have \textbf{Newton's identities} (see e.g. \cite{Zeilberger84} for a short proof): for all $1 \leq k \leq r$ we have
\[ k s_k(x_1,\ldots,x_r) = \sum_{i=1}^k (-1)^{i-1} s_{k-i}(x_1,\ldots,x_r) \pi_k(x_1,\ldots,x_r) \]
and 
\[ \pi_k(x_1,\ldots,x_r) = (-1)^{k-1} k s_k(x_1,\ldots,x_r) + \sum_{i=1}^{k-1} (-1)^{k-1+i} s_{k-i}(x_1,\ldots,x_r) \pi_i(x_1,\ldots,x_r). \]
From these we see that for elements $y_1,\ldots,y_r$ in a field of characteristic $p$, for any $1 \leq k \leq r$, the vanishing of the power sums 
$\pi_i(y_1,\ldots,y_r)$ for $1 \leq i \leq k$ is equivalent to the vanishing of the elementary symmetric functions $s_i(y_1,\ldots,y_r)$ for $1 \leq i \leq k$ and $p \nmid i$.  The result follows easily.
\end{proof}

\begin{thm}
\label{KEYVANDERLEMMA}
Let $F$ be a field of characteristic $p > 0$.  For a nonempty subset $Y \subset F$ or size $r \geq 2$, the following are equivalent: 
\begin{itemize}
\item[(i)] There is $g \in F[t]$ and $c \in F^{\times}$ such that $\varphi_Y = g(t^p) + ct$.  
\item[(ii)] The polynomial $\varphi_Y'$ is a nonzero constant (i.e., has degree zero).  
\item[(iii)] The polynomial $\varphi_Y'$ is constant on $Y$. 
\item[(iv)] The set $Y$ is Vandermonde of size a multiple of $p$.
\end{itemize}
\end{thm}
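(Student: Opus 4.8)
The plan is to dispose of the three ``differential'' conditions (i), (ii), (iii) first and then to connect them to the ``symmetric function'' condition (iv) via Proposition~\ref{SZIKLAI2}, which --- through Newton's identities --- has already done the work of translating between power sums and elementary symmetric functions. For (i) $\Rightarrow$ (ii) I would write $\varphi_Y = g(t^p)+ct$ and differentiate: every monomial of $g(t^p)$ has exponent divisible by $p$ and so contributes $0$ to the formal derivative over a field of characteristic $p$, whence $\varphi_Y' = c$, a nonzero constant. The implication (ii) $\Rightarrow$ (iii) is trivial. For (iii) $\Rightarrow$ (ii), let $c$ be the common value of $\varphi_Y'$ on $Y$; then $\varphi_Y'-c$ has degree at most $r-1$ yet vanishes at the $r$ distinct points of $Y$, so $\varphi_Y' \equiv c$ as a polynomial, and $c \neq 0$ because $\varphi_Y$ is squarefree, hence $\gcd(\varphi_Y,\varphi_Y')=1$ and $\varphi_Y'\neq 0$. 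Finally, for (ii) $\Rightarrow$ (i): if $\varphi_Y' = c \in F^\times$, then $\varphi_Y-ct$ has vanishing derivative, and over a field of characteristic $p$ a polynomial with vanishing derivative lies in $F[t^p]$, so $\varphi_Y-ct = g(t^p)$ for some $g \in F[t]$. This gives (i) $\Leftrightarrow$ (ii) $\Leftrightarrow$ (iii).

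To link this to (iv), I would argue (i) $\Leftrightarrow$ (iv). Given (i), the term $ct$ has degree $1 < r$ (here $r\geq 2$ is used), so the leading term $t^r$ of the monic polynomial $\varphi_Y$ must come from $g(t^p)\in F[t^p]$; hence $p \mid r$. Writing $\varphi_Y = t^r + \sum_{i=0}^{r-1} a_i t^i$ and comparing with $g(t^p)+ct$ shows $a_1 = c \neq 0$ and $a_i = 0$ whenever $2 \leq i \leq r-1$ and $p\nmid i$. Then Proposition~\ref{SZIKLAI2}(a) says $Y$ is Vandermonde or super-Vandermonde, and Proposition~\ref{SZIKLAI2}(b) rules out super-Vandermonde since $a_1\neq 0$; so $Y$ is Vandermonde of size divisible by $p$, which is (iv). Conversely, if $Y$ is Vandermonde with $p\mid r$, then Proposition~\ref{SZIKLAI2}(a) gives $a_i = 0$ for $2\leq i\leq r-1$ with $p\nmid i$, while Proposition~\ref{SZIKLAI2}(b), together with $Y$ not being super-Vandermonde, forces $a_1\neq 0$; since also $p\mid r$, the only monomial of $\varphi_Y$ whose exponent $i$ satisfies $p\nmid i$ is $a_1 t$, so $\varphi_Y = g(t^p) + a_1 t$ with $a_1 \in F^\times$, which is (i).

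I do not expect a genuine obstacle: the arithmetic heart of the matter is Newton's identities, already encapsulated in Proposition~\ref{SZIKLAI2}. The two points that need attention are the claim $c\neq 0$ in (iii) $\Rightarrow$ (ii), which genuinely uses that $\varphi_Y$ is squarefree (hence coprime to its derivative), and the index bookkeeping in (i) $\Leftrightarrow$ (iv): one uses $r \geq 2$ to extract $p\mid r$ from the shape of $\varphi_Y$, and one uses that, once $p \mid r$, the condition $p\nmid i$ is equivalent to $p\nmid(r-i)$, so that the vanishing pattern of the $a_i$ matches the Vandermonde --- as opposed to super-Vandermonde --- property exactly as recorded in Proposition~\ref{SZIKLAI2} via the relation $a_i = (-1)^{r-i}s_{r-i}(Y)$.
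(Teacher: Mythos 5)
Your proposal is correct and follows essentially the same route as the paper: the cycle among (i), (ii), (iii) uses the identical derivative/degree-counting argument (the paper just goes (iii) $\Rightarrow$ (i) in one step where you pass through (ii)), and the equivalence with (iv) is obtained exactly as in the paper from Proposition~\ref{SZIKLAI2} together with the fact that $p \mid r$ rules out the super-Vandermonde case (Lemma~\ref{SZIKLAI1}). The two delicate points you flag --- nonvanishing of $c$ via separability of $\varphi_Y$, and extracting $p \mid r$ before matching the vanishing pattern of the $a_i$ --- are precisely the ones the paper's proof relies on.
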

\begin{proof}
(i) $\iff$ (iv): It follows from Proposition \ref{SZIKLAI2}a) that $Y$ is Vandermonde of size divisible by $p$ if and only if $\varphi_Y = 
g(t^p) + ct$ for some $c \in F$.  Since $p \mid \# Y$, by Lemma \ref{SZIKLAI1} the set 
$Y$ is \emph{not} super-Vandermonde, so $c \neq 0$.   \\
(i) $\implies$ (ii): We have $(g(t^p)+ct)' = c$. \\
(ii) $\implies$ (iii): This is immediate.  \\
(iii) $\implies$ (i): Let $r = \# Y = \deg \varphi_Y$, and let $c \in F$ be the constant value of $\varphi_Y'$ on $Y$.  Since the polynomial 
$\varphi_Y(X)$ is separable, for all $y \in Y$ we have $\varphi_Y'(y) \neq 0$, so $c \neq 0$.  Consider $f \coloneqq \varphi_Y - ct$.  
Then $f'$ has degree at most $r-1$ and vanishes identically on $Y$, a set of size $r$, so $f'$ is the zero polynomial.  Thus there is $g \in F[t]$ such that $\varphi_Y - ct = f = g(t^p)$.  
\end{proof}
\noindent
As explained in the introduction, Theorem \ref{KEYVANDERLEMMA} implies Theorem \ref{VCW}.  In fact we get the following more 
general result that applies to any field of characteristic $p$.

\begin{thm}[Vandermonde Coefficient Formula]
Let $F$ be a field of prime characteristic $p$.  For $1 \leq i \leq n$ let $X_i \subset F$ be a finite nonempty Vandermonde 
subset of cardinality $d_i+1$ divisible by $p$. For $1 \leq i \leq n$, let $c_i = \varphi_i' \in F^{\times}$.  Put $d = (d_1,\ldots,d_n)$ and $X = \prod_{i=1}^n X_i$.  Let $f \in F[t_1,\ldots,t_n]$ be $d$-topped -- this means that the monomial $t_1^{d_1} \cdots t_n^{d_n}$ 
does not divide any other monomial appearing in $f$ with a nonzero coefficient, and this holds when $\deg(f) \leq d$ --
and let $c_d(f)$ be the coefficient of $t_1^{d_1} \cdots t_n^{d_n}$ in $f$.  Then we have 
\[ \left( c_1 \cdots c_n \right) c_d(f) = \sum_{x \in X} f(x). \]
\end{thm}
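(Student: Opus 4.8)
The plan is to obtain this as an essentially immediate corollary of the Coefficient Formula of Schauz \cite{Schauz08}, Lason \cite{Lason10} and Karasev--Petrov \cite{Karasev-Petrov12} (the refinement of Theorem \ref{CN} recorded as \cite[Thm.~3]{Clark14} and already used to prove Theorem \ref{RESCW0} above), combined with Theorem \ref{KEYVANDERLEMMA}. Recall that the Coefficient Formula states: for any field $F$, any finite nonempty subsets $X_1,\dots,X_n \subset F$ with $\# X_i = d_i + 1$, and any $d$-topped $f \in F[t_1,\dots,t_n]$, one has
\[ c_d(f) = \sum_{x \in X} \frac{f(x)}{\prod_{i=1}^n \varphi_i'(x_i)}, \]
where $\varphi_i = \varphi_{X_i}$; the denominators lie in $F^{\times}$ because each $\varphi_i$ is separable (it has $\# X_i$ distinct roots in $F$). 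Note that the hypotheses of the theorem under consideration are exactly those needed so that each $X_i$ is a Vandermonde set of cardinality divisible by $p$, hence exactly those under which Theorem \ref{KEYVANDERLEMMA} applies.

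First I would invoke Theorem \ref{KEYVANDERLEMMA}: the implication (iv) $\Rightarrow$ (ii) says that, for each $i$, the derivative $\varphi_i'$ is a nonzero constant, namely $c_i \in F^{\times}$. In particular $\varphi_i'(x_i) = c_i$ for every $x_i \in X_i$, so for every $x = (x_1,\dots,x_n) \in X$ the product $\prod_{i=1}^n \varphi_i'(x_i) = c_1 \cdots c_n$ is independent of $x$. Substituting this into the Coefficient Formula and pulling the constant out of the sum gives
\[ c_d(f) = \frac{1}{c_1 \cdots c_n}\sum_{x \in X} f(x), \]
and multiplying both sides by $c_1 \cdots c_n \in F^{\times}$ yields the stated identity. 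The only point requiring attention is to confirm that the notion of ``$d$-topped'' used here agrees with the hypothesis under which the cited Coefficient Formula is valid, and that the size conditions $\# X_i = d_i+1$ are in force; there is no real obstacle, as all the substance is contained in Theorem \ref{KEYVANDERLEMMA}.

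If one prefers an argument not relying on the Coefficient Formula, the alternative is to write $f$ as a linear combination of monomials and sum directly, using $\sum_{x \in X} x_1^{e_1} \cdots x_n^{e_n} = \prod_{i=1}^n \pi_{e_i}(X_i)$. Since $X_i$ is Vandermonde of size $d_i+1$ with $p \mid \# X_i$, one has $\pi_k(X_i) = 0$ for $0 \le k \le d_i - 1$ (for $k=0$ because $\sum_{x \in X_i} 1 = \#X_i = 0$ in $F$, and for $1 \le k \le d_i-1$ because $\omega(X_i) = d_i$), while $\pi_{d_i}(X_i) \neq 0$. Hence every monomial of $f$ other than $t_1^{d_1}\cdots t_n^{d_n}$ --- which, by the $d$-topped condition, must have some exponent $e_i \le d_i - 1$ --- contributes $0$, leaving $\sum_{x \in X} f(x) = c_d(f)\prod_{i=1}^n \pi_{d_i}(X_i)$. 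It then remains to identify $\pi_{d_i}(X_i)$ with $c_i = \varphi_i'$, which follows from the formal Laurent series identity $\varphi_i'(t)/\varphi_i(t) = \sum_{k \ge 0} \pi_k(X_i)\, t^{-k-1}$ together with the vanishing of $\pi_0(X_i),\dots,\pi_{d_i-1}(X_i)$ and Theorem \ref{KEYVANDERLEMMA}; this last bookkeeping step is the only slightly delicate part of the self-contained route.
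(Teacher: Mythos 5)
Your proof is correct and follows exactly the route the paper takes: the paper's proof is the one-line remark that the result ``follows from a suitable version of the Coefficient Formula,'' and your argument supplies precisely the intended details, namely that Theorem \ref{KEYVANDERLEMMA}(iv)$\Rightarrow$(ii) makes each $\varphi_i'$ the nonzero constant $c_i$, so the denominators in the Coefficient Formula become the constant $c_1\cdots c_n$. The alternative power-sum computation you sketch is also sound, but it is not needed.
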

\begin{proof}
This follows from a suitable version of the Coefficient Formula: e.g. from \cite[Thm. 3.9]{Clark14}.
\end{proof}


\subsection{More on Cosets}
Suppose $Y \subset F$ is a coset of a finite subgroup of $(F,+)$.  Then $\varphi_Y'$ is constant on $Y$: this was shown in the proof of Proposition \ref{COSETPROP} when $F$ is finite, but the argument holds verbatim.  Applying Theorem \ref{KEYVANDERLEMMA}, we deduce:

\begin{cor}
\label{COSETVANDER}
Let $Y$ be a coset of a finite subgroup of $(F,+)$.  Then $Y$ is a Vandermonde set.
\end{cor}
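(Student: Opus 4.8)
The plan is to invoke Theorem \ref{KEYVANDERLEMMA}, so the whole task reduces to verifying that $\varphi_Y'$ is constant on $Y$ when $Y$ is a coset of a finite additive subgroup $G$ of $(F,+)$, since condition (iii) of that theorem then immediately gives that $Y$ is Vandermonde of size a multiple of $p$ (in particular Vandermonde). First I would recall the general identity, valid for any finite nonempty $Y \subset F$ and any $x \in Y$, that $\varphi_Y'(x) = \prod_{y \in Y \setminus \{x\}} (x - y)$; this is the same computation used in the proof of Proposition \ref{COSETPROP} and needs nothing beyond the product rule applied to $\varphi_Y(t) = \prod_{y \in Y}(t-y)$.

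Next I would write $Y = a + G$ for some $a \in F$ and finite subgroup $G \subseteq (F,+)$, and fix an arbitrary $x \in Y$, say $x = a + h$ with $h \in G$. Then as $y$ ranges over $Y \setminus \{x\}$, writing $y = a + h'$ with $h' \in G \setminus \{h\}$, we get $x - y = h - h'$, and as $h'$ runs over $G \setminus \{h\}$ the difference $h - h'$ runs over $G \setminus \{0\}$ (this is just the bijection $h' \mapsto h - h'$ of $G$ fixing the point $h$ as the preimage of $0$). Hence $\varphi_Y'(x) = \prod_{g \in G \setminus \{0\}} g$, a quantity that does not depend on the choice of $x \in Y$. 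So $\varphi_Y'$ is constant on $Y$ — and this value is nonzero because $\varphi_Y$ is separable, but we don't even need to check that separately since Theorem \ref{KEYVANDERLEMMA} handles it.

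With condition (iii) of Theorem \ref{KEYVANDERLEMMA} verified, that theorem yields condition (iv): $Y$ is a Vandermonde set (of size divisible by $p$), which is exactly the assertion of the corollary. Since the paper already proved the equivalence of (i)--(iv) in full generality over any field of characteristic $p > 0$, no finiteness of $F$ is needed. I do not anticipate a real obstacle here: the only thing to be careful about is the edge case $\# Y = 1$ (i.e. $G$ trivial), where $\varphi_Y' $ is the constant polynomial $1$ and the notion ``Vandermonde'' as defined in the paper requires $r \geq 2$; one should either tacitly assume $\# Y \geq 2$ as elsewhere in the section, or note that the $r = 1$ case is vacuous. Apart from that bookkeeping point, the proof is a two-line reduction to Theorem \ref{KEYVANDERLEMMA} plus the coset reindexing argument already rehearsed in Proposition \ref{COSETPROP}.
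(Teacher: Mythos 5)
Your proof is correct and is essentially identical to the paper's: both verify that $\varphi_Y'$ is constant on the coset $Y$ via the reindexing computation from Proposition \ref{COSETPROP} and then invoke the implication (iii) $\Rightarrow$ (iv) of Theorem \ref{KEYVANDERLEMMA}. Your remark about the trivial case $\# Y = 1$ is a reasonable bookkeeping observation but does not change the substance.
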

\noindent
Corollary \ref{COSETVANDER} was shown for additive subgroups in $\F_{2^a}$ by G\'acs-Weiner \cite[Ex. 2.3(i)]{GW03} and for cosets of additive subgroups in $\F_q$ by Sziklai-Tak\'ats \cite[Prop. 1.8(i)]{Sziklai-Takats08}.  Our proof is different from both of theirs, which draw on the theory of 
additive polynomials.  With this approach one can deduce more about $\varphi_Y$ when $Y$ is a coset, as we now explain.
\\ \\
Suppose $F$ contains the finite field $\F_q$.  A polynomial $f \in F[t]$ is \textbf{functionally $\F_q$-linear} if the induced map $E(f): F \ra F$ given by $x \mapsto f(x)$ is an $\F_q$-vector space endomorphism.  In characteristic zero, if the induced map $E(f): F \ra F$ were
even a group homomorphism then $f$ would have to be of the form $f(t) = at$ for some $a \in F$; however in positive characteristic 
the functionally $\F_q$-linear polynomials are precisely those of the form $\sum_{i=0}^n a_i t^{q^i}$ for $a_i \in F$.  We say that a polynomial $f \in F[t]$ is \textbf{additive} if it is functionally $\F_p$-linear: equivalently, $E(f): (F,+) \ra (F,+)$ is a group homomorphism.    
A polynomial $f \in F[t]$ is \textbf{functionally $\F_q$-affine} if $f-f(0)$ is functionally $\F_q$-linear.  

\begin{thm}
Let $Y \subset F$ be a finite nonempty subset.  
\begin{itemize}
\item[a)] 
The following are equivalent:
\begin{itemize}
\item[(i)] The subset $Y$ is an $\F_q$-subspace of $F$.
\item[(ii)] The polynomial $\varphi_Y$ is functionally $\F_q$-linear.  
\end{itemize}
\item[b)] The following are equivalent:
\begin{itemize}
\item[(i)] The subset $Y$ is a coset of an $\F_q$-subspace of $F$.
\item[(ii)] The polynomial $\varphi_Y$ is functionally $\F_q$-affine. 
\end{itemize}
\end{itemize}
\end{thm}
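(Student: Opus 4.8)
The plan is to establish part (a) first and then deduce part (b) by translating. Within part (a), the implication (ii) $\implies$ (i) and both implications of part (b) are essentially formal; the one point with genuine content is (a), (i) $\implies$ (ii) --- the statement that $\varphi_V$ is functionally $\F_q$-linear whenever $V$ is a finite $\F_q$-subspace of $F$ --- and I would prove it by induction on $\dim_{\F_q} V$.

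For (a), (ii) $\implies$ (i): if $\varphi_Y$ is functionally $\F_q$-linear then $E(\varphi_Y)\colon F \to F$ is an $\F_q$-vector space endomorphism, and since $\varphi_Y = \prod_{y \in Y}(t - y)$ has zero set exactly $Y$ in $F$, we get $Y = \ker E(\varphi_Y)$, an $\F_q$-subspace. For (a), (i) $\implies$ (ii): I would induct on $k \coloneqq \dim_{\F_q} Y$, the case $k = 0$ being $\varphi_Y = t$. For $k \geq 1$, pick an $\F_q$-hyperplane $W \subset Y$ and $v_0 \in Y \setminus W$; partitioning $Y$ into the $q$ cosets $c v_0 + W$ ($c \in \F_q$) and multiplying out gives the polynomial identity $\varphi_Y(t) = \prod_{c \in \F_q} \varphi_W(t - c v_0)$. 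The inductive hypothesis makes $E(\varphi_W)$ $\F_q$-linear, so $E(\varphi_W)(x - c v_0) = E(\varphi_W)(x) - c u$ where $u \coloneqq \varphi_W(v_0)$, and $u \neq 0$ since $v_0 \notin W$ is not a root of $\varphi_W$; combining this with $\prod_{c \in \F_q}(s - c u) = s^q - u^{q - 1} s$ yields $E(\varphi_Y)(x) = E(\varphi_W)(x)^q - u^{q - 1} E(\varphi_W)(x)$. As $s \mapsto s^q - u^{q - 1} s$ is $\F_q$-linear (Frobenius is additive and fixes $\F_q$, and multiplication by $u^{q-1}$ is $\F_q$-linear), $E(\varphi_Y)$ is a composite of $\F_q$-linear maps, hence $\F_q$-linear.

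For part (b): if $Y = a + V$ with $V$ an $\F_q$-subspace, then $\varphi_Y(t) = \varphi_V(t - a)$, and the $\F_q$-linearity of $E(\varphi_V)$ from part (a) gives $E(\varphi_Y)(x) - \varphi_Y(0) = E(\varphi_V)(x - a) - E(\varphi_V)(-a) = E(\varphi_V)(x)$, so $\varphi_Y$ is functionally $\F_q$-affine. Conversely, if $\varphi_Y$ is functionally $\F_q$-affine, fix any $a \in Y$ and set $Z \coloneqq Y - a$, so that $\varphi_Z(t) = \varphi_Y(t + a)$; writing $L \coloneqq \varphi_Y - \varphi_Y(0)$, which is functionally $\F_q$-linear by hypothesis, and noting $E(L)(a) = \varphi_Y(a) - \varphi_Y(0) = -\varphi_Y(0)$ because $a \in Y$, additivity of $E(L)$ gives $E(\varphi_Z)(x) = \varphi_Y(x + a) = E(L)(x)$. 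Hence $\varphi_Z$ is functionally $\F_q$-linear, so $Z$ is an $\F_q$-subspace by part (a), and $Y = a + Z$ is a coset of one.

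The step I expect to be the main obstacle is the inductive case of (a), (i) $\implies$ (ii): one must verify cleanly that the coset partition of $Y$ yields the factorization $\varphi_Y(t) = \prod_{c} \varphi_W(t - c v_0)$ at the level of polynomials and then run the elementary identity $\prod_{c \in \F_q}(s - c u) = s^q - u^{q - 1} s$. Everything else reduces to bookkeeping with kernels of $\F_q$-linear maps and with the translations $t \mapsto t \pm a$.
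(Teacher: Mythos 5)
Your argument is correct, and it is worth noting that it is genuinely more self-contained than what the paper does: the paper's entire proof is a citation of Theorems 3.56 and 3.57 of Lidl--Niederreiter (together with the remark that those proofs do not use finiteness of $F$), whereas you supply the induction on $\dim_{\F_q} Y$ explicitly. Your key steps all check out: the factorization $\varphi_Y(t) = \prod_{c \in \F_q} \varphi_W(t - c v_0)$ is immediate from the coset decomposition $Y = \bigsqcup_{c \in \F_q}(c v_0 + W)$; the element $u = \varphi_W(v_0)$ is nonzero because the roots of $\varphi_W$ in $F$ are exactly $W$; and the identity $\prod_{c \in \F_q}(s - cu) = s^q - u^{q-1}s$ together with additivity of $s \mapsto s^q$ closes the induction. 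The reductions of (a)(ii)$\implies$(i) to ``$Y = \ker E(\varphi_Y)$'' and of part (b) to part (a) by translation are also the standard ones. The one substantive difference from the cited source is that Lidl--Niederreiter prove the stronger \emph{polynomial-level} statement that $\varphi_Y$ is literally a linearized $q$-polynomial $\sum_i a_i t^{q^i}$ (plus a constant in the affine case) --- which is what the paper uses in the discussion immediately following the theorem --- whereas your induction tracks only the induced \emph{function} $E(\varphi_Y)$. That weaker conclusion is exactly what the theorem as stated asserts, so your proof is complete for it; and in fact your induction upgrades with no extra work, since if $\varphi_W = \sum_i a_i t^{q^i}$ as a polynomial then $\varphi_W(t - c v_0) = \varphi_W(t) - c\,\varphi_W(v_0)$ holds as a polynomial identity (using $c^{q^i} = c$ for $c \in \F_q$), so $\varphi_Y(t) = \varphi_W(t)^q - u^{q-1}\varphi_W(t)$ is again literally a $q$-polynomial.
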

\begin{proof}
When $F$ is finite, these results are special cases of \cite[Thms. 3.56 and 3.57]{LN97}.   The proofs given there do not use 
the finiteness of $F$.
\end{proof}
\noindent
In particular, if $Y$ is a coset of a finite subgroup of $(F,+)$ then $\varphi_Y-\varphi_Y(0)$ is an additive polynomial, so there 
are $a_0,\ldots,a_n,b \in F$ such that 
\[ \varphi_Y(t) = \sum_{i=0}^n a_i t^{p^i} + b. \]
Notice that this is stronger than $\varphi_Y$ just being of the form $g(t^p) + ct$.  If $F \supseteq \F_q$ and $Y$ is a coset of an $\F_q$-subspace, then there are $a_0,\ldots,a_n,b \in F$ such that 
\[ \varphi_Y(t) = \sum_{i=0}^n a_i t^{q^i} + b, \]
a stronger conclusion still.

\subsection{More on Vandermonde Sets}
Theorem \ref{VCW} is a generalization of Proposition \ref{COSETPROP}.  How much of an improvement is it?  This comes down to asking how many more Vandermonde subsets of $\F_q$ of cardinality divisible by $p$ there are than cosets of additive subgroups.  In \cite[Prop. 1.8(ii)]{Sziklai-Takats08}, Sziklai and Tak\'ats construct a family 
of such Vandermonde sets over $\F_{q^2}$ with $p = 2$ that are, in general, not cosets of additive subgroups. 
In particular, certain geometrical objects in finite projective planes known as hyperovals (see \cite{BCP06} and the references therein for the list of known infinite families of hyperovals), give rise to such Vandermonde sets.
There are further examples of Vandermonde sets known for small fields (see for example \cite[Example 7]{Abdukhalikov-Ho19}), but a full classification is out of reach (simply because a full classification of hyperovals appears to be out of reach \cite{Vandendriessche}).  

We ask the following question related to the enumeration of Vandermonde sets. 

\begin{ques}
\label{VANDERMONDEQUES}
For a prime number $p$ and $a \in \Z^+$, let $V(p,n)$ be the number of subsets $X \subset \F_{p^n}$ that are Vandermonde of size divisible by $p$, and let $C(p,n)$ be the number of subsets $X \subset \F_{p^n}$ that are cosets of subgroups of $(\F_{p^n},+)$.  Straightforward calculation gives 
\[ C(p,n) = \sum_{d=0}^n p^{(1-d)(n-d)} \frac{\# \GL_n(\F_p)}{\# \GL_d(\F_p) \# \GL_{n-d}(\F_p)}, \]
where
\[ \# \GL_n(\F_p) = \prod_{i=1}^n (p^n-p^{n-i+1}). \]
Is it true that for each fixed $p$ we have $C(p,n) = o(V(p,n))$ as $n \ra \infty$?
\end{ques}

\section{The Invariant $\uomega(X)$}
\noindent
Again let $F$ be a field of characteristic $p > 0$.  For a function $f: F^n \ra F$ and a finite subset $X \subset F^n$ we put 
\[ \int_X f \coloneqq \sum_{x \in X} f(x) \in F. \]
A polynomial $P \in F[t_1,\ldots,t_n]$ determines a function $E(P): x \in F^n \mapsto P(x) \in F$.  We put $\int_X P \coloneqq \int_X E(P)$.   
\\ \\
For $\kk = (k_1,\ldots,k_n) \in \N^n$, we put $|\kk| \coloneqq k_1 + \ldots + k_n$.  For a finite nonempty subset $X \subseteq F^n$ and $\kk = (k_1,\ldots,k_n) \in \N^n$, let
\[ \pi_{\kk}(X) \coloneqq \int_X t_1^{k_1} \cdots t_n^{k_n} = \sum_{x = (x_1,\ldots,x_n) \in X} x_1^{k_1} \cdots x_n^{k_n}. \]
Thus 
\begin{equation}
\label{PI0EQ}
 \pi_{\underline{0}}(X) = \int_X 1 = \# X \in F. 
\end{equation}
We put 
\[ \uomega(X) \coloneqq \inf \{ |\kk| \ \big{|} \ \pi_{\kk}(X) \neq 0\}. \]
Thus for $d \in \N$ we have $\uomega(X) \geq d+1$ iff $\int_X P = 0$ for every $P \in F[t_1,\ldots,t_n]$ of degree at most $d$.
In particular, by (\ref{PI0EQ}) we have $\uomega(X) \geq 1$ iff $\pi_{\underline{0}}(X) = 0$ iff $p \mid \# X$.  
\\ \\
Our definition allows $\uomega(X)$ to be infinite, which would occur if and only if $\pi_{\kk}(X) = 0$ for all $\kk \in \N^n$.  But the following result shows that this is never the case.

\begin{lemma}
\label{NEWRCW3}
Let $X \subset F^n$ be finite and nonempty.  For $1 \leq i \leq n$, let
\[ X_i \coloneqq \{ a \in F \mid \exists \  (x_1,\ldots,x_{i-1},a_,x_{i+1},\ldots,x_n) \in X \} \]
be the projection of $X$ onto its $i$th coordinate.  Then we have 
\begin{equation}
\label{VECTORVANEQ}
 \uomega(X) \leq \sum_{i=1}^n \left( \#X_i -1 \right). 
\end{equation}
\end{lemma}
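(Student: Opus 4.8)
The plan is to exhibit a single polynomial $P \in F[t_1,\ldots,t_n]$ of total degree exactly $\sum_{i=1}^n (\#X_i - 1)$ with $\int_X P \neq 0$. Since $\int_X P$ is an $F$-linear combination of the quantities $\pi_{\kk}(X)$ ranging over the monomials $t_1^{k_1}\cdots t_n^{k_n}$ that actually occur in $P$, and each such $\kk$ satisfies $|\kk| \le \deg P$, it follows at once that $\pi_{\kk}(X) \neq 0$ for at least one such $\kk$, and hence $\uomega(X) \le \sum_{i=1}^n(\#X_i-1)$.

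To build $P$, fix an arbitrary point $x^0 = (x^0_1,\ldots,x^0_n) \in X$, which exists because $X$ is nonempty. For each $i$ set $d_i \coloneqq \#X_i - 1$ and define the Lagrange-type polynomial
\[ g_i(t_i) \coloneqq \prod_{a \in X_i \setminus \{x^0_i\}} \frac{t_i - a}{x^0_i - a} \in F[t_i], \]
which is well defined since $x^0_i - a \neq 0$ for $a \neq x^0_i$, has degree $d_i$, and satisfies $g_i(x^0_i) = 1$ while $g_i(a) = 0$ for every $a \in X_i \setminus \{x^0_i\}$. Put $P \coloneqq \prod_{i=1}^n g_i(t_i)$, so that $\deg P = \sum_{i=1}^n d_i = \sum_{i=1}^n(\#X_i - 1)$.

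The key observation is that $E(P)$ is supported on the single point $x^0$ of $X$. Indeed, for any $x = (x_1,\ldots,x_n) \in X$ we have $x_i \in X_i$ for all $i$ by the definition of the projections, so if $x \neq x^0$ then $x_i \neq x^0_i$ for some $i$, forcing $g_i(x_i) = 0$ and hence $P(x) = 0$; meanwhile $P(x^0) = \prod_{i=1}^n g_i(x^0_i) = 1$. Therefore $\int_X P = \sum_{x \in X} P(x) = 1 \neq 0$, which completes the argument outlined in the first paragraph.

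There is no substantial obstacle here; the only point worth flagging is that the construction uses nothing about the shape of $X$ beyond its coordinate projections — the factor $g_i$ annihilates all of $X_i$ except $x^0_i$, and that suffices precisely because every coordinate of every point of $X$ lies in the corresponding $X_i$, so no Cartesian-product hypothesis on $X$ is needed.
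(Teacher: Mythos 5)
Your proof is correct and is essentially identical to the paper's: both construct the same Lagrange-type indicator polynomial $\delta_{X,x^0}=\prod_i\prod_{a\in X_i\setminus\{x_i^0\}}\frac{t_i-a}{x_i^0-a}$ of degree $\sum_i(\#X_i-1)$, observe that it sums to $1$ over $X$, and conclude via the equivalence between $\uomega(X)\geq d+1$ and the vanishing of $\int_X P$ for all $P$ of degree at most $d$. The only difference is that you spell out that last equivalence explicitly, which the paper leaves implicit from the definition of $\uomega$.
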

\begin{proof}
Choose $x = (x_1,\ldots,x_n) \in X$, and put 
\[\delta_{X,x} \coloneqq \prod_{i=1}^n \prod_{y_i \in X_i \setminus \{x_i\}}
\frac{t_i-y_i}{x_i-y_i}. \] 
Then $\deg \delta_{X,x} = \sum_{i=1}^n \left( \# X_i - 1 \right)$.  For $y \in X$ we have $\delta_{X,x}(y) = \begin{cases} 1 & y = x \\ 0 & y \neq x \end{cases}$, so and $\int_X \delta_{X,x} = 1$.  
\end{proof}
\noindent
To be sure: when $n = 1$ and $Y \subseteq F$ is finite of size $r \geq 1$, we have $\uomega(Y) = \omega(Y)$ if $p \mid \# Y$, 
while if $p \nmid \# Y$ we have $\uomega(Y) = 0$ and $\omega(Y) \geq 1$.  (In fact, we did not define $\omega(Y)$ for subset of size 
$1$: extending the definition we gave, we would have $\omega(\{ x\}) = 1$ if $x \neq 0$, while $\omega( \{0\})$ would be infinite.)


\subsection{Proof of Theorem \ref{RESCW}}
 Once again, let $\chi \coloneqq \prod_{i=1}^r (1-f_i^{q-1})$ be Chevalley's polynomial, so for $x \in X$ we have \[\chi(x) = \begin{cases} 1 & x \in Z_X \\
0 & x \notin Z_X \end{cases}. \]  Moreover $\deg \chi = (q-1) \sum_{j=1}^r d_j < \uomega(X)$, so in $\F_q$ we have
\[ \# Z_X = \int_X \chi(x)  = 0 \in \F_q.\]
Since $\F_q$ has characteristic $p$, this yields $p \mid \# Z_X$.

\subsection{Optimal Subsets}
A finite nonempty subset $X \subseteq F^n$ is \textbf{optimal} if equality holds in (\ref{VECTORVANEQ}):
\[\uomega(X) = \sum_{i=1}^n \left( \# X_i - 1 \right). \] 
Thus when $n = 1$ we get that an optimal subset of $F$ is precisely a set that either has size $1$ or is a Vandermonde set of 
size divisible by $p$.

\begin{remark}
That $\F_q^n$ is optimal is the crux of Ax's ``Quick Proof of the Chevalley-Warning Theorem'' \cite[\S 2]{Ax64}, \cite[Thm. 1.1]{CGS21}.  
In some sense the proof of Theorem \ref{RESCW} corresponds to \emph{the rest} of Ax's proof...which is why it is so short.
\end{remark}

\begin{lemma}
\label{NEWRCW5}
For $1 \leq i \leq n$, let $X_i \subseteq F$ be finite nonempty, and put 
$X \coloneqq \prod_{i=1}^n X_i$.  \begin{itemize}
\item[a)] For all $\kk \in \N^n$ we have $\pi_{\kk}(X) = \prod_{i=1}^n \pi_{k_i}(X_i)$. 
\item[b)] Let $J \coloneqq \{1 \leq i \leq n \mid \# X_i \neq 1\}$.  Then we have 
\[\uomega(X) = \sum_{j \in J} \omega(X_j). \]
\item[c)] If each $X_i \subseteq \F_q$ is Vandermonde of size divisible by $p$, then \[\uomega(X) = \sum_{i=1}^n \left( \# X_i -1 \right), \]
so $X$ is optimal.  
\end{itemize}
\end{lemma}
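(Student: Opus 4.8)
The plan is to prove the three parts in order, with part~(a) doing the real work and parts~(b),(c) following by bookkeeping.

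\textbf{Part (a).} The key observation is that the product structure $X = \prod_{i=1}^n X_i$ makes the sum $\pi_{\kk}(X)$ factor. Explicitly, I would write
\[
\pi_{\kk}(X) = \sum_{(x_1,\dots,x_n)\in X_1\times\cdots\times X_n} x_1^{k_1}\cdots x_n^{k_n}
= \prod_{i=1}^n\Bigl(\sum_{x_i\in X_i} x_i^{k_i}\Bigr) = \prod_{i=1}^n \pi_{k_i}(X_i),
\]
where the middle equality is just the distributive law (expanding the product of the $n$ one-variable sums reproduces exactly the monomials indexed by elements of the Cartesian product). There is one minor convention to pin down: when $k_i = 0$ the factor $\sum_{x_i\in X_i} x_i^{0} = \#X_i$, consistent with $\pi_{\underline 0}(X_i) = \#X_i$ and with (\ref{PI0EQ}). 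No obstacle here; this is a one-line computation.

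\textbf{Part (b).} From part~(a), $\pi_{\kk}(X)\neq 0$ in $F$ iff $\pi_{k_i}(X_i)\neq 0$ for every $i$. For an index $i\notin J$, i.e. $\#X_i = 1$, say $X_i = \{a_i\}$, we have $\pi_{k_i}(X_i) = a_i^{k_i}$, which is nonzero precisely when $k_i = 0$ or $a_i\neq 0$; in any case such a factor can be made nonzero with $k_i = 0$, so these coordinates never force $|\kk|$ to grow. For $i\in J$ we have $\#X_i\ge 2$, so $\omega(X_i)$ is defined as the least $k\ge 1$ with $\pi_k(X_i)\neq 0$, and since $\pi_0(X_i) = \#X_i$, whether $\pi_0(X_i)\neq 0$ depends on $p\mid\#X_i$. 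To minimize $|\kk| = \sum_i k_i$ subject to every $\pi_{k_i}(X_i)\neq 0$: for $i\notin J$ take $k_i = 0$; for $i\in J$, if $p\nmid\#X_i$ we may take $k_i=0$ (and then that index contributes $0 = \omega(X_i)$ would be wrong — careful). Here I must be attentive: the stated formula is $\uomega(X)=\sum_{j\in J}\omega(X_j)$, and the earlier remark notes that for a size-$1$ set one would set $\omega$ accordingly, but for $j\in J$ with $p\nmid\#X_j$ one has $\omega(X_j)\ge 1$ yet $\pi_0(X_j)=\#X_j\neq 0$, which would let $k_j=0$. So in fact the claimed identity presupposes — or the minimization shows — that the optimal choice takes $k_j=0$ whenever $\pi_0(X_j)\neq0$; reconciling this means the intended reading is that $\uomega(X)$ equals the minimum over admissible $\kk$, and this minimum is $\sum_{j\in J}\omega(X_j)$ only when each relevant $X_j$ has $p\mid\#X_j$, or when one adopts the convention $\omega(X_j)=0$ for $p\nmid\#X_j$. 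I would state the argument carefully: $\uomega(X) = \sum_{i:\,p\mid\#X_i}\omega(X_i)$, which equals $\sum_{j\in J}\omega(X_j)$ under the hypotheses in play (in part~(c) every $X_i$ has $p\mid\#X_i$, so there is no ambiguity). \emph{This reconciliation of conventions is the main subtlety}, though it is a definitional point rather than a deep obstacle.

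\textbf{Part (c).} Assume each $X_i\subseteq\F_q$ is Vandermonde of size divisible by $p$. Then $J = \{1,\dots,n\}$ (since $p\mid\#X_i$ forces $\#X_i\ge p\ge 2$), and by definition of Vandermonde, $\omega(X_i) = \#X_i - 1$. Plugging into part~(b) gives
\[
\uomega(X) = \sum_{i=1}^n \omega(X_i) = \sum_{i=1}^n(\#X_i - 1),
\]
and since the projection of $X=\prod X_i$ onto coordinate $i$ is exactly $X_i$, the right-hand side is the upper bound of (\ref{VECTORVANEQ}), so $X$ is optimal. This is immediate once (a) and (b) are in hand, and it is this statement that, fed into Theorem~\ref{RESCW}, yields Theorem~\ref{VCW}.
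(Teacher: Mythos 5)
Your part (a) is exactly the paper's one-line computation, and your part (c) follows from it just as in the paper, so both are fine. The genuinely valuable content of your write-up is the doubt you raise in part (b), and you should not hedge there: the identity $\uomega(X) = \sum_{j\in J}\omega(X_j)$ is \emph{false} as stated whenever some $j\in J$ has $p\nmid \# X_j$. Concretely, take $F=\F_3$, $n=1$, $X=X_1=\{1,2\}$: then $\pi_{\underline{0}}(X)=2\neq 0$, so $\uomega(X)=0$, while $\pi_1(X_1)=1+2=0$ and $\pi_2(X_1)=1+4=2\neq 0$, so $\omega(X_1)=2$. (This is already implicit in the paper's own remark that $\uomega(Y)=0$ when $p\nmid\#Y$.) What your minimization proves, correctly, is
\[ \uomega(X) \;=\; \sum_{i\,:\,p\mid \# X_i} \omega(X_i). \]
Indeed, by (a) and the fact that a product in a field is nonzero iff every factor is, $\pi_{\kk}(X)\neq 0$ iff $\pi_{k_i}(X_i)\neq 0$ for all $i$, so minimizing $|\kk|$ decouples coordinatewise; the least admissible $k_i$ is $0$ when $p\nmid\# X_i$ (since $\pi_0(X_i)=\# X_i\neq 0$ in $F$), and is $\omega(X_i)$ when $p\mid \# X_i$ (which forces $\# X_i\ge p\ge 2$, so $\omega(X_i)$ is defined, and $\pi_0(X_i)=0$ rules out $k_i=0$). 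The paper's own proof of (b) simply asserts the conclusion when all $\# X_i\ge 2$ and does not confront this case, so it shares the defect; the error is harmless downstream because part (c) and Theorem \ref{VCW} only invoke (b) when every $\# X_i$ is divisible by $p$, where your corrected formula agrees with the stated one. I would restate (b) with $J$ replaced by $\{i : p\mid \# X_i\}$; with that emendation your proof is complete.
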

\begin{proof}
a) We have 
\[ \pi_{\kk}(X) = \sum_{(x_1,\ldots,x_n) \in \prod_{i=1}^n X_i} x_1^{k_1} \cdots x_n^{k_n} = \prod_{i=1}^n \sum_{x_i \in X_i} x_i^{k_i} = \prod_{i=1}^n \pi_{k_i}(X_i). \]
Part b) follows in the case where $\# X_i \geq 2$ for all $i$, as does part c).  
If $j \in \{1,\ldots,n\} \setminus J$ and $\kk \in \N^n$ is such that $\pi_{\kk}(X) \neq 0$, then $\pi_{(k_1,\ldots,k_{j-1},0,k_{j+1},\ldots,k_n)}(X) \neq 0$.  If $\kk \in \N^n$ is such that $\kk_j = 0$ for all $j \notin J$, then let $\hat{\kk} \in \N^J$ be the corresponding tuple with the indices outside of $J$ removed.  We have 
\[ \pi_{\kk}(X) = \pi_{\hat{\kk}}(\prod_{j \in J} X_j) = \sum_{j \in J} \omega(X_j). \qedhere \]
\end{proof}
\noindent
Lemma \ref{NEWRCW5} and Theorem \ref{RESCW} together imply Theorem \ref{VCW}.  

\subsection{The invariant $\uomega(X)$ of a coset $X$}
Now we look more closely at $\uomega(X)$ for a coset $X$ of a finite additive subgroup of $F$.  The following result reduces us to studying finite subgroups $G \subset (F^n,+)$.

\begin{lemma}[Affine Invariance]
\label{NEWRCW4}
Let $\operatorname{AGL}_n(F) = F^n \rtimes \GL_n(F)$ be the group of affine transformations of $F^n$.  For all $\sigma \in \operatorname{AGL}_n(F)$ and all finite nonempty $X \subset F^n$ we have $\uomega(\sigma(X)) = \uomega(X)$.
\end{lemma}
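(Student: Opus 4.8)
The plan is to split an arbitrary affine transformation $\sigma \in \operatorname{AGL}_n(F)$ into a translation part and a linear part, and to show that $\uomega$ is invariant under each separately; since the two types generate $\operatorname{AGL}_n(F)$, this suffices. Throughout I will use the characterization that $\uomega(X) \geq d+1$ iff $\int_X P = 0$ for every $P \in F[t_1,\ldots,t_n]$ with $\deg(P) \leq d$, which reduces everything to understanding how the degree filtration on $F[t_1,\ldots,t_n]$ interacts with precomposition by $\sigma$.

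First I would treat the linear part. If $\sigma$ is given by an invertible matrix $A \in \GL_n(F)$, then for a polynomial $P$ the composite $P \circ \sigma$ is obtained by substituting each $t_i$ by a homogeneous linear form in $t_1,\ldots,t_n$; hence $\deg(P \circ \sigma) \leq \deg(P)$, and applying the same bound to $\sigma^{-1}$ gives $\deg(P \circ \sigma) = \deg(P)$. Since $\int_{\sigma(X)} P = \int_X (P \circ \sigma)$ and $P \mapsto P \circ \sigma$ is a degree-preserving linear bijection of $F[t_1,\ldots,t_n]$, the set of degrees $d$ for which all degree-$\leq d$ polynomials integrate to zero over $X$ equals the corresponding set for $\sigma(X)$, so $\uomega(\sigma(X)) = \uomega(X)$.

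Next the translation part: let $\sigma$ be translation by $a = (a_1,\ldots,a_n) \in F^n$. Then $P \circ \sigma$ is $P(t_1+a_1,\ldots,t_n+a_n)$, which by multinomial expansion is again a polynomial of the same degree as $P$ (the top-degree part is unchanged), and $P \mapsto P(\cdot + a)$ is a degree-preserving linear automorphism of $F[t_1,\ldots,t_n]$ with inverse $P \mapsto P(\cdot - a)$. Again $\int_{\sigma(X)} P = \int_X (P \circ \sigma)$, and the same filtration argument gives $\uomega(\sigma(X)) = \uomega(X)$. Finally, every $\sigma \in \operatorname{AGL}_n(F) = F^n \rtimes \GL_n(F)$ is a composite of a translation and a linear map, so $\uomega$ is $\operatorname{AGL}_n(F)$-invariant.

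I do not expect a real obstacle here: the only point requiring a moment's care is the claim that substituting linear (resp. affine) forms into the variables is degree-preserving rather than merely degree-non-increasing, which follows by applying the non-increasing bound to both $\sigma$ and $\sigma^{-1}$. One could streamline the writeup by proving the single statement ``if $\Phi: F[t_1,\ldots,t_n] \to F[t_1,\ldots,t_n]$ is a filtered-degree-preserving linear bijection with $\int_{\sigma(X)} P = \int_X \Phi(P)$, then $\uomega(\sigma(X)) = \uomega(X)$'' and then instantiating it twice.
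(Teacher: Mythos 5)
Your proposal is correct and is essentially the paper's own argument: both rest on the identity $\int_{\sigma(X)} P = \int_X (P\circ\sigma)$, the fact that affine substitution preserves degree, and applying the resulting inequality to both $\sigma$ and $\sigma^{-1}$. The only cosmetic difference is that you factor $\sigma$ into a translation and a linear map and treat each separately, whereas the paper handles a general affine substitution in one step; this changes nothing of substance.
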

\begin{proof}
The group $\operatorname{AGL}_n(F)$ acts on polynomials: if \[\sigma \in \operatorname{AGL}_n(F): (x_1,\ldots,x_n) \in \F_q^n \mapsto  (a_1 + \sum_{j=1}^n m_{1,j} x_j,\ldots,a_n + 
\sum_{j=1}^n m_{n,j} x_j)\] and $P \in F[t_1,\ldots,t_n]$ then 
\[ P_{\sigma} = P(a_1 + \sum_{j=1}^n m_{1,i} t_i,\ldots,a_n + 
\sum_{j=1}^n m_{n,j} t_i). \]
This action preserves the degree.  So if $\deg(P) < \uomega(X)$ then 
\[ \int_{\sigma(X)} P = \int_X P_{\sigma} = 0, \]
so $\uomega(\sigma(X)) \leq \uomega(X)$.  Applying this with $\sigma^{-1}$ and $\sigma(X)$ in place of $\sigma$ and $X$ we deduce that $\uomega(\sigma(X)) = \uomega(X)$.
\end{proof}

\begin{example}
\label{DIAGEX}
Let $X \subset \F_p^2$ be a nontrivial, proper coset, so $\# X = p$.  The group $\operatorname{AGL}_2(\F_p)$ acts transitively on all nontrivial proper cosets of $\F_p^2$, so by Lemma \ref{NEWRCW4} all such cosets have the same invariant $\uomega(X)$; taking 
$X = \F_p \times \{0\}$ and applying Lemma \ref{NEWRCW5} we find that this common value is $p-1$.  However, for the subgroup
\[\Delta \coloneqq \{ (a,a) \mid a \in \F_p \}  \]
we have $\uomega(\Delta) = (p-1) < 2(p-1) = (\# \F_p - 1) + (\# \F_p-1)$.  Thus optimality is \emph{not} $\AGL_n(F)$-invariant.  More precisely optimality is translation-invariant and is not generally $\GL_n(F)$-invariant.
\end{example}

\begin{prop}
\label{EASYLINEARPROP}
Let $G \subset \F_q^n$ be an additive subgroup.  
\begin{itemize}
\item[a)] If $G$ is an $\F_q$-subspace, then $\uomega(G) = (\dim_{\F_q}(G))(q-1)$.
\item[b)] If $q = p$, then $\uomega(G) = (\log_p(\# G))(p-1)$.
\end{itemize}
\end{prop}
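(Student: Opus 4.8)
The plan is to reduce both parts to the product formula of Lemma \ref{NEWRCW5}(b) by way of the affine (in fact, linear) invariance established in Lemma \ref{NEWRCW4}.

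For part a), set $d \coloneqq \dim_{\F_q}(G)$. First I would invoke the transitivity of $\GL_n(\F_q)$ on the set of $d$-dimensional $\F_q$-subspaces of $\F_q^n$: choosing a basis of $G$ and extending it to a basis of $\F_q^n$ produces $\sigma \in \GL_n(\F_q) \subseteq \AGL_n(\F_q)$ with $\sigma\bigl(\F_q^d \times \{0\}^{n-d}\bigr) = G$, whence $\uomega(G) = \uomega\bigl(\F_q^d \times \{0\}^{n-d}\bigr)$ by Lemma \ref{NEWRCW4}. Next I would apply Lemma \ref{NEWRCW5}(b) to the product $\F_q^d \times \{0\}^{n-d} = \prod_{i=1}^n X_i$ with $X_i = \F_q$ for $1 \leq i \leq d$ and $X_i = \{0\}$ for $d < i \leq n$; here the index set $J$ of Lemma \ref{NEWRCW5} is $\{1,\ldots,d\}$, so $\uomega(G) = \sum_{i=1}^d \omega(\F_q)$. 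Finally, $\F_q$ is a coset of a finite subgroup of $(\F_q,+)$, hence Vandermonde by Corollary \ref{COSETVANDER}, and $p \mid q = \#\F_q$, so $\omega(\F_q) = \#\F_q - 1 = q-1$ (equivalently, this is the classical identity that $\sum_{x \in \F_q} x^k = 0$ for $1 \leq k \leq q-2$). Hence $\uomega(G) = d(q-1)$.

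For part b), I would note that when $q = p$ an additive subgroup $G \subseteq \F_p^n$ is automatically an $\F_p$-subspace, since scalar multiplication by an element of $\F_p = \Z/p\Z$ is just repeated addition; so part a) applies verbatim with $q = p$ and gives $\uomega(G) = \bigl(\dim_{\F_p} G\bigr)(p-1) = (\log_p \#G)(p-1)$.

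I do not expect a serious obstacle: the only steps requiring a little care are the use of $\GL_n(\F_q)$-transitivity to normalize $G$ to a coordinate subspace before applying the product formula, and the evaluation $\omega(\F_q) = q-1$, which is the one-variable instance of the optimality of Vandermonde sets of size divisible by $p$.
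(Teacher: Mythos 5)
Your proposal is correct and follows essentially the same route as the paper: reduce $G$ to the coordinate subspace $\F_q^{d} \times \{0\}^{n-d}$ via $\GL_n(\F_q)$-transitivity and Lemma \ref{NEWRCW4}, evaluate via the product formula of Lemma \ref{NEWRCW5} together with $\omega(\F_q) = q-1$, and deduce part b) from the fact that additive subgroups of $\F_p^n$ are $\F_p$-subspaces. You merely spell out a couple of steps (the appeal to Lemma \ref{NEWRCW5} and the computation of $\omega(\F_q)$) that the paper leaves implicit.
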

\begin{proof}
a) Let $e \coloneqq \dim_{\F_q}(G)$.  Since $\GL_n(\F_q)$ acts transitively on $\F_q$-subspaces of dimension $e$, we have $\uomega(G) = \uomega(\F_q^e \times \{0\}^{n-e}) = e(q-1)$. \\
b) Every finite subgroup of a field of characteristic $p$ is an $\F_p$-subspace, of dimension $\log_p(\# G)$.  So part a) applies.
\end{proof}
\noindent
Thus we can compute $\omega(G)$ when $n = 1$ or $F = \F_p$.  The remaining case seems more interesting.  

\begin{example}
\label{RICHEXAMPLE}
There are subgroups $A_1,B_1,B_2$ of $(F,+)$ with 
\[ A_1 \cong (\Z/p\Z)^2,  \ B_1 \cong B_2 \cong \Z/p\Z. \]
We put 
\[ A \coloneqq A_1 \times \{0\}^{n-1}, \ B = B_1 \times B_2 \times \{0\}^{n-2}. \]
Then
\[ \uomega(A) = \uomega(A_1) = p^2-1 > 2(p-1) = \uomega(B_1 \times B_2) = \uomega(B), \]
even though $A \cong (\Z/p\Z)^2 \cong B$ and thus there is an $\F_p$-linear automorphism $\sigma: F \ra F$ such that 
$B = \sigma(A)$.  
\end{example}
\noindent
In light of Examples \ref{DIAGEX} and \ref{RICHEXAMPLE} it is not clear to us what form the determination of $\uomega(G)$ 
for an arbitrary finite subgroup $G \subset (F,+)$ should take.   The following is the best possible lower bound that takes only 
the isomorphism class of $G$ into account:

\begin{prop}[Aichinger-Moosbauer]
\label{AMGROUPPROP}
Let $F$ be a field of characteristic $p > 0$, and let $G \subset (F^n,+)$ be a finite subgroup of order $p^e$.  Then
\begin{equation}
\label{AMSUBGROUPEQ}
 \uomega(G) \geq e(p-1). 
\end{equation}
\end{prop}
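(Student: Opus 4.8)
The plan is to reduce the computation of $\int_G P$ over the ``bad'' set $G$ to a computation of $\int_{\F_p^e} Q$ over the ``good'' set $\F_p^e$, at the cost of replacing $P$ by a polynomial $Q$ with $\deg Q \le \deg P$. First I would recall that a finite subgroup $G \subset (F^n,+)$ of order $p^e$ is automatically an $\F_p$-subspace of $F^n$ of dimension $e$: it contains $0$, is closed under addition, and scalar multiplication by an element of $\F_p = \{0,1,\dots,p-1\}$ is iterated addition. Fix an $\F_p$-basis $v_1,\dots,v_e \in F^n$ of $G$, so that the map $\iota\colon \F_p^e \to G$, $a = (a_1,\dots,a_e) \mapsto \sum_{i=1}^e a_i v_i$, is a bijection.

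Second, given any $P \in F[t_1,\dots,t_n]$ with $\deg P < e(p-1)$, I would set $Q(s_1,\dots,s_e) := P\!\left(\sum_{i=1}^e s_i v_i\right) \in F[s_1,\dots,s_e]$, obtained by substituting the homogeneous linear forms $\sum_{i=1}^e (v_i)_j\, s_i$ (for $j = 1,\dots,n$) into $P$. Since each of these forms is linear in $s_1,\dots,s_e$, we have $\deg Q \le \deg P < e(p-1)$; and since $\iota$ is a bijection, $\int_G P = \sum_{a\in\F_p^e} P(\iota(a)) = \sum_{a\in\F_p^e} Q(a) = \int_{\F_p^e} Q$, the last integral taken over $\F_p^e \subset F^e$.

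Third, I would invoke the already-established ``optimal'' case $\uomega(\F_p^e) = e(p-1)$: this follows from Lemma~\ref{NEWRCW5}(c), since $\F_p^e = \prod_{i=1}^e \F_p$ and $\F_p$ is a coset of a subgroup of $(\F_p,+)$, hence Vandermonde by Corollary~\ref{COSETVANDER}, of size $p$ divisible by $p$; alternatively it is Proposition~\ref{EASYLINEARPROP}(b) applied to the subgroup $\F_p^e$. Hence $\deg Q < \uomega(\F_p^e)$ forces $\int_{\F_p^e} Q = 0$, so $\int_G P = 0$. As $P$ was an arbitrary polynomial of degree $< e(p-1)$, this is precisely the assertion $\uomega(G) \ge e(p-1)$.

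There is no deep obstacle here, but the point worth flagging — and the reason the argument changes the polynomial rather than the set — is that the $\F_p$-basis $v_1,\dots,v_e$ of $G$ need not be $F$-linearly independent, so in general there is no $\sigma \in \GL_n(F)$ carrying $G$ onto the standard subgroup $\F_p^e \times \{0\}^{n-e}$, and Affine Invariance (Lemma~\ref{NEWRCW4}) does not apply directly; indeed Example~\ref{RICHEXAMPLE} shows that $\uomega(G)$ really does depend on more than the isomorphism type of $G$. Substituting linear forms into $P$ sidesteps this, because such a substitution can only preserve or lower the total degree, which is all we need.
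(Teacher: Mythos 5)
Your proof is correct, and it takes a genuinely different route from the paper, which offers no argument of its own and simply cites two lemmas of Aichinger--Moosbauer; their machinery is the \emph{functional degree} of maps between abelian groups (iterated difference operators), whereas you give a direct, self-contained reduction to the optimal case. Each step checks out: since $\operatorname{char} F = p$, the subgroup $G$ is an $\F_p$-subspace of dimension $e$; substituting the homogeneous linear forms $t_j \mapsto \sum_i (v_i)_j s_i$ into $P$ can only preserve or lower total degree and intertwines evaluation with the bijection $\iota\colon \F_p^e \to G$, so $\int_G P = \int_{\F_p^e} Q$ with $\deg Q \le \deg P < e(p-1)$; and $\uomega(\F_p^e) = e(p-1)$ is already available from Lemma~\ref{NEWRCW5}c) via Corollary~\ref{COSETVANDER} (or from Proposition~\ref{EASYLINEARPROP}), which kills $\int_{\F_p^e} Q$ by the characterization of $\uomega$ in terms of integrals of low-degree polynomials. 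Your closing remark identifies exactly the right subtlety: the $v_i$ need not be $F$-linearly independent, so Lemma~\ref{NEWRCW4} cannot move $G$ onto $\F_p^e \times \{0\}^{n-e}$, and Example~\ref{RICHEXAMPLE} shows the inequality (\ref{AMSUBGROUPEQ}) can indeed be strict --- which is why a one-directional, degree-non-increasing substitution is the correct tool rather than a change of coordinates. What each approach buys: yours keeps the result elementary and internal to the paper's own framework, making it transparent that the bound is nothing but the optimality of $\F_p^e$ transported along an arbitrary $\F_p$-linear parametrization; the citation to Aichinger--Moosbauer instead places the bound inside a theory that applies to arbitrary finite abelian groups, not only elementary abelian $p$-groups sitting inside $F^n$.
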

\begin{proof}
This follows from \cite[Lemma 8.2 and Lemma 12.1]{Aichinger-Moosbauer21}.   
\end{proof}

\begin{remark}
Combining Proposition \ref{AMGROUPPROP} and Theorem \ref{RESCW}, we find that if $X \subset \F_q^n$ is a coset of a subgroup of 
order $p^e$ and $f_1,\ldots,f_r \in \F_q[t_1,\ldots,t_n]$ have positive degree, then if 
\[ (q-1)\left(\sum_{j=1}^r \deg(f_j)\right) < d(p-1), \]
then $p \mid \# Z_X$.  When $q = p$, this result implies Proposition \ref{COSETPROP}.  If $q = p^a$ with $a > 1$, then a better 
bound follows from \cite[Thm. 12.2]{Aichinger-Moosbauer21}: we have $p \mid \# Z_X$ if 
\begin{equation}
\label{BIGAMEQ}
 a \left( \sum_{j=1}^r \deg(f_j) \right) < p-1. 
\end{equation}
The low degree condition (\ref{BIGAMEQ}) has a different form than the low degree condition (\ref{RESCW0EQ1}) of Proposition 
\ref{COSETPROP}, but using an elementary convexity argument, Aichinger and Moosbauer show that if (\ref{RESCW0EQ1}) holds then so does (\ref{BIGAMEQ}).
\end{remark}

\subsection{Graphs of Functions}
We end by giving a class of subsets $X$ of $\F_q^n$ for which $\uomega(X)$ can be bounded from below.

\begin{prop}
\label{OMEGAGRAPHPROP}
\label{NEWRCW6}
Let $n \geq 2$.  For $f \in \F_q[t_1,\ldots,t_{n-1}]$ of degree $d \geq 1$, let 
\[ X_f \coloneqq \{ (x,f(x)) \in \F_q^n\} \]
be the graph of the associated function $E(f): \F_q^{n-1} \ra \F_q$.  Then:
\begin{itemize}
\item[a)] We have $\omega(X_f) \geq \frac{(n-1)(q-1)}{d}$.
\item[b)] If $\omega(X_f) = \frac{(n-1)(q-1)}{d}$ then $d \mid (n-1)(q-1)$.
\item[c)] If $n-1 \mid d \mid (n-1)(q-1)$ and the only monomial of degree $d$ in the support of 
$f$ is $t_1^{\frac{d}{n-1}} \cdots t_{n-1}^{\frac{d}{n-1}}$, then $\omega(X_f) = \frac{(n-1)(q-1)}{d}$.  
\item[d)] If $n = 2$, then $\omega(X_f) = \frac{q-1}{d}$ iff $d \mid q-1$.  
\end{itemize}
\end{prop}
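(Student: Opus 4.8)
The plan is to reduce everything to power sums over $\F_q^{n-1}$ and then apply optimality of $\F_q^{n-1}$. The key identity, obtained by parametrizing $X_f$ via $x \mapsto (x, f(x))$, is that for every $\kk = (k_1, \ldots, k_{n-1}, k_n) \in \N^n$,
\[ \pi_{\kk}(X_f) = \sum_{x \in \F_q^{n-1}} x_1^{k_1} \cdots x_{n-1}^{k_{n-1}} f(x)^{k_n} = \int_{\F_q^{n-1}} t_1^{k_1} \cdots t_{n-1}^{k_{n-1}} f^{k_n}, \]
and the polynomial being integrated has degree at most $(k_1 + \cdots + k_{n-1}) + d k_n \leq d |\kk|$, using $d \geq 1$. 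I will use two standard facts about $\F_q^{n-1}$: it is optimal, so $\uomega(\F_q^{n-1}) = (n-1)(q-1)$ (Proposition \ref{EASYLINEARPROP}a), hence $\int_{\F_q^{n-1}} g = 0$ whenever $\deg g < (n-1)(q-1)$; and $\int_{\F_q^{n-1}} (t_1 \cdots t_{n-1})^{q-1} = \bigl(\sum_{x \in \F_q} x^{q-1}\bigr)^{n-1} = (-1)^{n-1} \neq 0$.

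For (a): if $\pi_{\kk}(X_f) \neq 0$, the first fact forces the integrand to have degree $\geq (n-1)(q-1)$, so $d|\kk| \geq (n-1)(q-1)$; taking the infimum over such $\kk$ yields $\uomega(X_f) \geq (n-1)(q-1)/d$. Part (b) is then immediate: $\uomega(X_f)$ is a non-negative integer (finite by Lemma \ref{NEWRCW3}), so if it equals $(n-1)(q-1)/d$, that fraction is an integer, i.e. $d \mid (n-1)(q-1)$.

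For (c): put $e := d/(n-1) \in \Z^+$ and $m := (n-1)(q-1)/d \in \Z^+$, so $em = q-1$. By hypothesis $\deg f = d$ and the only degree-$d$ monomial appearing in $f$ is $(t_1 \cdots t_{n-1})^e$, so the degree-$d$ homogeneous part of $f$ equals $c (t_1 \cdots t_{n-1})^e$ for the nonzero coefficient $c$ of that monomial; consequently $f^m$ has degree $dm = (n-1)(q-1)$ with top homogeneous part $c^m (t_1 \cdots t_{n-1})^{q-1}$. Now apply the key identity at $\kk = (0, \ldots, 0, m)$: the terms of $f^m$ of degree $< (n-1)(q-1)$ integrate to $0$ over $\F_q^{n-1}$, while $\int_{\F_q^{n-1}} c^m (t_1 \cdots t_{n-1})^{q-1} = (-1)^{n-1} c^m \neq 0$. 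Hence $\pi_{(0,\ldots,0,m)}(X_f) \neq 0$, so $\uomega(X_f) \leq m$, and with (a) this gives $\uomega(X_f) = m$.

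Finally (d) is just the case $n = 2$ of (b) and (c): for $n - 1 = 1$ the condition $n - 1 \mid d$ is vacuous and the unique top-degree monomial of the univariate $f$ of degree $d$ is $t_1^d$, so the hypotheses of (c) become exactly $d \mid q - 1$, whence (c) gives $\uomega(X_f) = (q-1)/d$; the converse implication is (b). I do not expect a real obstacle here; the one point deserving care is part (c), namely verifying that raising $f$ to the power $m$ produces no degree-$(n-1)(q-1)$ monomial other than $(t_1 \cdots t_{n-1})^{q-1}$ — which holds precisely because the degree-$d$ part of $f$ is a single monomial — together with routine bookkeeping in the edge cases ($k_n = 0$, small $q$).
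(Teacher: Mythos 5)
Your proof is correct and takes essentially the same route as the paper's: you rewrite $\pi_{\kk}(X_f)$ as $\int_{\F_q^{n-1}} t_1^{k_1}\cdots t_{n-1}^{k_{n-1}} f^{k_n}$, use the optimality of $\F_q^{n-1}$ (so that nonvanishing forces the integrand's degree to be at least $(n-1)(q-1)$) for parts (a) and (b), and for part (c) evaluate at $\kk = (0,\ldots,0,(n-1)(q-1)/d)$ where the top homogeneous part of $f^{(n-1)(q-1)/d}$ is a nonzero multiple of $(t_1\cdots t_{n-1})^{q-1}$. Your write-up just spells out the top-degree bookkeeping in (c) in slightly more detail than the paper does.
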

\begin{proof}
a) For $\kk = (k_1,\ldots,k_n) \in \N$, we have $\pi_{\kk}(X_f) =$ 
\[\sum_{(x_1,\ldots,x_{n-1}) \in \F_q^{n-1} } x_1^{k_1} \cdots x_{n-1}^{k_{n-1}} f(x_1,\ldots,x_{n-1})^{k_n}= \int_{\F_q^{n-1}}
t_1^{k_1} \cdots t_{n-1}^{k_{n-1}} f(t_1,\ldots,t_{n-1})^{k_n}. \]
Since $\mu(\F_q^{n-1}) = (n-1)(q-1)$, if $\pi_{\kk}(X) \neq 0$ then \[k_1 + \ldots + k_{n-1} + d k_n = \deg(t^{k_1} \cdots t^{k_{n-1}} 
f^{k_n}) \geq (n-1)(q-1), \]
so 
\[ |\kk| = k_1 + \ldots + k_n \geq \frac{k_1}{d} + \ldots + \frac{k_{n-1}}{d} + k_n \geq \frac{(n-1)(q-1)}{d}. \] 
b) Clearly equality can only hold if $d \mid (n-1)(q-1)$.   \\
c) If the hypotheses hold, then the only monomial of degree $(n-1)(q-1)$ in the support of $f^{\frac{(n-1)(q-1)}{d}}$ is 
$t_1^{q-1}\cdots t_{n-1}^{q-1}$, so \[\pi_{(0,\ldots,0,\frac{(n-1)(q-1)}{d})} X_f = \int_{\F_q^{n-1}} f^{\frac{(n-1)(q-1)}{d}} \neq 0. \]
d) When $n = 2$ and $d \mid q-1$, the conditions of part c) hold.
\end{proof}
\noindent
When $d = 1$, the subset $X_f \subset \F_q^n$ is an affine $\F_q$-hyperplane, so it follows from Proposition \ref{EASYLINEARPROP}a) that $\omega(X_f) = 
(n-1)(q-1)$, so we get another case in which the bound of Proposition \ref{OMEGAGRAPHPROP}a) is sharp.  

\end{document}